\newtheorem{thm}{Theorem}[section]
\newtheorem{lem}[thm]{Lemma}
\newtheorem{Def}[thm]{Definition}
\newtheorem{prop}[thm]{Proposition}
\newtheorem{claim}[thm]{Claim}
\def\A{\mathcal{A}}
\def\N{\mathcal{N}}
\def\F{\mathcal{F}}
\def\P{\mathcal{P}}
\def\Q{\mathcal{Q}}
\def\s{\mathcal{S}}
\newcommand{\si}{\sigma}
\newcommand{\sm}{\setminus}
\newcommand{\nin}{\notin}
\numberwithin{equation}{section}
\begin{document}
\title{Hom complexes of graphs of diameter $1$}
\author{Anurag Singh\footnote{{Chennai Mathematical Institute, India. Email: asinghiitg@gmail.com}}}
\date{}
\maketitle

\begin{abstract}
Given a finite simplicial complex $X$ and a connected graph $T$ of diameter $1$, in \cite{anton} Dochtermann had conjectured that $\text{Hom}(T,G_{1,X})$ is homotopy equivalent to $X$. Here, $G_{1,X}$ is the reflexive graph obtained by taking the $1$-skeleton of the first barycentric subdivision of $X$ and adding a loop at each vertex. This was proved by Dochtermann and Schultz in \cite{ds12}. 

In this article, we give an alternate proof of this result by understanding the structure of the cells of Hom$(K_n,G_{1,X})$, where $K_n$ is the complete graph on $n$ vertices. We prove that the neighborhood complex of $G_{1,X}$ is homotopy equivalent to $X$ and Hom$(K_n,G_{1,X})\simeq $ Hom$(K_{n-1},G_{1,X})$, for each $n\geq 3$.
\end{abstract}

\noindent {\bf Keywords} : Graphs, Hom complexes, Nerve Lemma.

\noindent 2000 {\it Mathematics Subject Classification} primary 05C15, secondary 57M15

\vspace{.1in}

\hrule


\section{{\bf Introduction}}

Lov\'asz \cite{Lov78} introduced the concept of $\text{Hom}$ complex (see Definition \ref{prel:homcompelex}) to obtain a lower bound for the chromatic number of graphs and thus proved the Kneser Conjecture. In $2006$, Babson and Kozlov (\cite{BK06} and \cite{BK07}), during their investigations of these complexes, proved that for any graph $G$, the Hom complex
$\text{Hom}(K_2,G)$ is homotopy equivalent to the Neighborhood Complex (see Definition \ref{prel:nbdcompelex}) of $G$.

For any loopless graph $G$, the Hom complex $\text{Hom}(K_2,G)$ has a free $\mathbb{Z}_2$-action. In 2007, Csorba \cite{cso07} proved that any free $\mathbb{Z}_2$-space can be realized (up to $\mathbb{Z}_2$-homotopy type) as $\text{Hom}(K_2,H)$ for some suitably chosen graph $H$. 

Consider a finite simplicial complex $X$. Let $G_{k,X}$ be the $1$-skeleton of the $k^{th}$ iterative barycentric subdivision of $X$, with a loop at every vertex. For a connected graph  $T$ with at least one edge, Dochtermann \cite{anton} in 2009, showed that $\text{Hom}(T,G_{k,X})$ is homotopy equivalent to $X$. In his proof, he had assumed that the integer $k$ satisfied the condition $2^{k-1}-1 \geq \text{diam}(T)$. Considering the situation when $k=1$, in \cite{anton} he conjectured the following result, which was later proved by Dochtermann and Schultz in \cite{ds12}.

\begin{thm}[{\cite[Conjecture $4.1$]{anton}}]\label{thm3} Let $X$ be a finite simplicial complex and $T$ be a connected graph with diameter $1$. Then, $\text{Hom}(T,G_{1,X}) \simeq X.$
\end{thm}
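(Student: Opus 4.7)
The plan is to follow the two-step strategy announced in the abstract. A connected graph of diameter $1$ is precisely a complete graph $K_n$ on $n\geq 2$ vertices, so Theorem \ref{thm3} reduces to the claim that $\text{Hom}(K_n,G_{1,X})\simeq X$ for every $n\geq 2$. The proof proceeds by induction on $n$ and combines three ingredients: (i) the Babson--Kozlov identification $\text{Hom}(K_2,G)\simeq\N(G)$, (ii) a direct proof that $\N(G_{1,X})\simeq X$, and (iii) a reduction $\text{Hom}(K_n,G_{1,X})\simeq\text{Hom}(K_{n-1},G_{1,X})$ for $n\geq 3$. Iterating (iii) down to $n=2$ and then applying (i) and (ii) yields the theorem.

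For ingredient (ii), the vertices of $\N(G_{1,X})$ are the simplices of $X$, and — because of the loops at every vertex of $G_{1,X}$ — a nonempty set $\{\sigma_1,\ldots,\sigma_k\}$ spans a face of $\N(G_{1,X})$ iff some simplex $\rho$ of $X$ is comparable, under inclusion, to every $\sigma_i$. Consequently $\text{sd}(X)$ embeds in $\N(G_{1,X})$ as a subcomplex, and the task is to show that the extra faces (coming from elements that share a common comparable simplex without forming a chain) do not change the homotopy type. My approach is to apply the Nerve Lemma to the cover of $\N(G_{1,X})$ by the closed simplices $\overline{N(\rho)}$: each $\overline{N(\rho)}$ is a full simplex of $\N(G_{1,X})$ (common neighbor $\rho$), and each nonempty intersection $\overline{N(\rho_1)}\cap\cdots\cap\overline{N(\rho_m)}$ remains a full simplex, so each piece and each intersection is contractible. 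Restricting to a suitable subfamily of the cover indexed by the facets and/or vertices of $X$, one can identify the resulting nerve with $X$ itself. A natural alternative is to build a discrete Morse matching on $\N(G_{1,X})$ whose critical cells are exactly the chains in the face poset of $X$, directly exhibiting a collapse onto $\text{sd}(X)\simeq X$.

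For ingredient (iii), a cell of $\text{Hom}(K_n,G_{1,X})$ is a function $\eta\colon[n]\to 2^{V(G_{1,X})}\setminus\{\emptyset\}$ such that for all $i\neq j$ and all $\sigma\in\eta(i),\tau\in\eta(j)$ the simplices $\sigma,\tau$ are comparable in $X$. My strategy is to analyse the forgetful map $\pi\colon\eta\mapsto\eta|_{[n-1]}$, which is a cellular projection $\text{Hom}(K_n,G_{1,X})\to\text{Hom}(K_{n-1},G_{1,X})$, together with the section $\iota$ obtained by duplicating a coordinate (say $\iota(\eta')(n):=\eta'(n-1)$). I would show that $\pi$ is a homotopy equivalence by a poset-fiber argument: for each $\eta'\in\text{Hom}(K_{n-1},G_{1,X})$, the poset of valid extensions $A=\eta(n)$ — the nonempty $A\subseteq V(G_{1,X})$ whose every element is comparable with every $\sigma\in\bigcup_{i<n}\eta'(i)$ — is shown to be contractible, ideally because it possesses a unique maximum element. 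The hypothesis $n-1\geq 2$ enters essentially here, since pairwise compatibility between at least two different coordinates of $\eta'$ constrains the allowed $A$'s enough to produce such a maximum.

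The main obstacle is ingredient (iii). The subtlety is that an individual $\eta'(i)$ need not itself be a chain in $X$ — incomparable elements within a single coordinate are allowed — so controlling the poset of valid extensions requires a careful case analysis of how incomparable pairs $\sigma,\tau\in\eta'(i)$ force the other coordinates of $\eta'$ to lie either below $\sigma\cap\tau$ or above $\sigma\cup\tau$ inside $X$. The essential use of $n-1\geq 2$ is reassuring — indeed the reduction must fail for $n=2$, since otherwise iteration would give $\text{Hom}(K_2,G_{1,X})\simeq\text{Hom}(K_1,G_{1,X})\cong V(G_{1,X})$, which is typically not homotopy equivalent to $X$. Ingredient (ii) also requires care to choose the cover so that its nerve is recognizably $X$, rather than one that merely tautologically reproduces $\N(G_{1,X})$.
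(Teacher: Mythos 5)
Your overall architecture is the same as the paper's --- reduce to complete graphs, prove $\N(G_{1,X})\simeq X$, and prove $\mathrm{Hom}(K_n,G_{1,X})\simeq \mathrm{Hom}(K_{n-1},G_{1,X})$ for $n\geq 3$ by a Quillen-type fiber argument on the restriction map --- but there are concrete gaps. First, in this paper's conventions loops are allowed (the target $G_{1,X}$ is itself reflexive), so a connected graph of diameter $1$ is $K_n$ \emph{possibly with loops at some vertices}, not just $K_n$; a loop at $v$ imposes the extra condition $\eta(v)\times\eta(v)\subseteq E(G_{1,X})$ and genuinely changes the complex. The paper disposes of this case separately by folding $T$ onto the single looped vertex $v$ (using $N(u)\subseteq N(v)$ for every $u$) and identifying $\mathrm{Hom}$ of a looped point with $\mathrm{Cl}(G_{1,X})=\mathrm{Sd}(X)$. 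Second, in your ingredient (iii) the fiber over $\rho$ has the unique maximal element $\bigl(\rho(1),\dots,\rho(n-1),\bigcap_{i<n}N(\rho(i))\bigr)$ \emph{only if that intersection is nonempty}; this non-emptiness is the entire difficulty, and you leave it at ``ideally''. The paper proves it (Proposition 4.1) by an explicit construction: pick the coordinate $k$ attaining the minimal dimension, observe that the union $\tau_0$ of the minimal-dimension elements of $\eta(k)$ is contained in every element of every other coordinate (this is where $n\geq 2$ enters) and hence is a simplex of $X$, then successively enlarge $\tau_0$ by adjoining the elements of $\eta(k)$ incomparable to the current set. Some such argument is indispensable; without it condition (A) of the fiber lemma is unverified.

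The more serious structural problem is in ingredient (ii). The nerve of the full cover $\{\overline{N(\rho)}\}_{\rho\in X}$ is tautologically $\N(G_{1,X})$ itself (a set of indices spans a nerve simplex iff it has a common neighbor), so it gives nothing new; and the subfamily $\{\overline{N(x)}\}_{x\in X^{0}}$, whose nerve \emph{is} $X$, is not a cover: for two disjoint simplices $\sigma,\tau$ with $\sigma\cup\tau\in X$ (e.g.\ $\sigma=\{1\}$, $\tau=\{2\}$ with $\{1,2\}\in X$) the face $\{\sigma,\tau\}$ of $\N(G_{1,X})$ lies in no $\overline{N(x)}$ with $x\in X^{0}$. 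So ``restricting to a suitable subfamily'' is not available, and the Nerve Lemma alone cannot finish the job. The missing idea --- the main content of the paper's Section 3 --- is a collapsing argument: ordering the simplices of $X$ by decreasing dimension, one shows $\N(G_{1,X})=K_1\searrow K_2\searrow\cdots\searrow K_{p-q+1}=\bigcup_{x\in X^{0}}\overline{N(x)}$, and only then applies the Nerve Lemma to the vertex-indexed family. (Note also that the subcomplex the paper collapses onto strictly contains $\mathrm{Sd}(X)$: it keeps faces $\{\sigma_1,\sigma_2\}$ with $\sigma_1,\sigma_2$ incomparable simplices sharing a vertex. So your discrete-Morse alternative, a collapse onto $\mathrm{Sd}(X)$ itself, would require a different and as yet unspecified matching.)
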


An alternate proof of this conjecture is provided in this article. Here, we study the complex Hom$(K_n,G_{1,X})$ in detail. The main results of this article are: 

 \begin{thm}\label{ng1x}
For any finite simplicial complex $X$, the neighborhood complex of $G_{1,X}$ is homotopy equivalent to $X$.
\end{thm}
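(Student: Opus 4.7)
The plan is to apply the Nerve Lemma to the closed-star cover of the barycentric subdivision $\mathrm{sd}(X)\cong X$. The vertices of $\mathrm{sd}(X)$ are the simplices of $X$, and its simplices are the chains in the face poset $P$ of $X$. For any vertex $\sigma$ of $\mathrm{sd}(X)$, the closed star $\overline{\mathrm{st}}(\sigma)\subseteq\mathrm{sd}(X)$ coincides with the order complex $\Delta(P_\sigma)$ of $P_\sigma=\{\tau\in P:\tau\subseteq\sigma\text{ or }\tau\supseteq\sigma\}$, which is a cone with apex $\sigma$ and is therefore contractible. The intersection of several such closed stars is
\[
\bigcap_{j=1}^{k}\overline{\mathrm{st}}(\sigma_j)=\Delta(P_{\sigma_1,\dots,\sigma_k}),\qquad P_{\sigma_1,\dots,\sigma_k}=\{\tau\in P:\tau\text{ is comparable to every }\sigma_j\}.
\]
Unravelling definitions, $\{\sigma_1,\dots,\sigma_k\}$ is a face of $\mathcal{N}(G_{1,X})$ exactly when the $\sigma_j$'s admit a common neighbour in $G_{1,X}$, i.e.\ when $P_{\sigma_1,\dots,\sigma_k}\neq\emptyset$. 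Hence the nerve of this cover is precisely $\mathcal{N}(G_{1,X})$, and the theorem reduces to verifying that the cover is good.

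The main technical claim is therefore that $\Delta(P_{\sigma_1,\dots,\sigma_k})$ is contractible whenever non-empty. My plan is to exhibit a \emph{conical point} $\rho_\ast\in P_{\sigma_1,\dots,\sigma_k}$ that is comparable to every other element of this sub-poset; granted this, the order complex is a simplicial cone with apex $\rho_\ast$ and hence contractible. To build $\rho_\ast$, fix any $\tau\in P_{\sigma_1,\dots,\sigma_k}$ and partition the indices as $B=\{j:\sigma_j\subseteq\tau\}$ and $A=\{j:\sigma_j\supsetneq\tau\}$. Define
\[
\rho_\ast=\bigcup_{j\in B}\sigma_j\text{ if }B\neq\emptyset,\qquad \rho_\ast=\bigcap_{j\in A}\sigma_j\text{ otherwise}.
\]
In the first case $\rho_\ast\subseteq\tau$ is a face of the simplex $\tau$, and in the second $\rho_\ast$ contains $\tau$ as a non-empty intersection of simplices; either way $\rho_\ast$ is a simplex of $X$. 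Comparability of $\rho_\ast$ with each $\sigma_i$ is built into the definition, so $\rho_\ast\in P_{\sigma_1,\dots,\sigma_k}$.

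The key verification is that $\rho_\ast$ is comparable to an arbitrary $\rho\in P_{\sigma_1,\dots,\sigma_k}$. Suppose $B\neq\emptyset$. For each $j\in B$, $\rho$ satisfies $\rho\subseteq\sigma_j$ or $\rho\supseteq\sigma_j$. If $\rho\supseteq\sigma_j$ for every $j\in B$, then $\rho\supseteq\bigcup_B\sigma_j=\rho_\ast$; if $\rho\subseteq\sigma_j$ for every $j\in B$, then $\rho\subseteq\bigcap_B\sigma_j\subseteq\rho_\ast$; and in the mixed situation $\sigma_{j_0}\subseteq\rho\subseteq\sigma_{j_1}$ for some $j_0,j_1\in B$, one still has $\rho\subseteq\sigma_{j_1}\subseteq\rho_\ast$. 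The $B=\emptyset$ branch is order-dual. With the good-cover claim in hand, the Nerve Lemma yields $\mathrm{sd}(X)\simeq\mathcal{N}(G_{1,X})$, and $|\mathrm{sd}(X)|\cong|X|$ finishes the proof.

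The obstacle I expect to spend the most effort on is precisely this construction. The natural global candidates $\bigcap_j\sigma_j$ and $\bigcup_j\sigma_j$ need not belong to $X$ at all (the former may be empty, the latter a ``missing'' simplex), so one cannot avoid passing through an auxiliary $\tau\in P_{\sigma_1,\dots,\sigma_k}$; the delicate part is then showing that the $\tau$-dependent $\rho_\ast$ really does see every element of the sub-poset, which is what the short case analysis above achieves.
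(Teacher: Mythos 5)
Your proof is correct, but it takes a genuinely different route from the paper. The paper also uses the Nerve Lemma, but on a different cover living in a different space: it covers the subcomplex $K_{X^0}=\bigcup_{x\in X^{0}}\bigl(\mathcal{P}(N(x))\setminus\{\emptyset\}\bigr)$ of $\mathcal{N}(G_{1,X})$ by the full simplices $\mathcal{P}(N(x))\setminus\{\emptyset\}$ indexed only by the \emph{original} vertices $x\in X^{0}$, identifies the nerve of that cover with $X$ itself, and then must do the real work separately: a sequence of simplicial collapses (Propositions 3.2 and 3.3, processing the facets $N(\sigma_i)$ in order of decreasing $\dim\sigma_i$) showing $\mathcal{N}(G_{1,X})\searrow K_{X^0}$. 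You instead cover $\mathrm{Sd}(X)$ by all closed vertex stars, so that the nerve is $\mathcal{N}(G_{1,X})$ on the nose, and the entire burden shifts to showing the cover is good; your conical-point construction (take any $\tau$ in the intersection poset, and form $\bigcup_{\sigma_j\subseteq\tau}\sigma_j$ or, failing that, $\bigcap_{\sigma_j\supsetneq\tau}\sigma_j$) does this correctly --- the element produced is a nonempty simplex of $X$, lies in $P_{\sigma_1,\dots,\sigma_k}$, and is comparable to every element of that poset, so the order complex is a cone. Your approach buys a single application of the Nerve Lemma with no collapsing argument, and is shorter and arguably cleaner; the paper's approach buys the stronger combinatorial statement that $\mathcal{N}(G_{1,X})$ literally collapses onto the subcomplex $K_{X^0}$, which is more information than a homotopy equivalence. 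One small point worth making explicit in a final write-up: the identification $\overline{\mathrm{st}}(\sigma)=\Delta(P_\sigma)$ and $\bigcap_j\overline{\mathrm{st}}(\sigma_j)=\Delta(P_{\sigma_1,\dots,\sigma_k})$ uses that $\mathrm{Sd}(X)$ is a flag complex (a set of pairwise comparable simplices is a chain), which is what lets you complete any chain of elements comparable to $\sigma$ to a chain containing $\sigma$.
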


 \begin{thm}\label{thm2}
Let $n \geq 2$. If $K_n$ is the complete graph on $n$ vertices and $X$ is any finite simplicial complex, then $\text{Hom}(K_n,G_{1,X}) \simeq Hom(K_2,G_{1,X}).$
\end{thm}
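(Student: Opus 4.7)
The plan is to induct on $n$, proving $\text{Hom}(K_n, G_{1,X}) \simeq \text{Hom}(K_{n-1}, G_{1,X})$ for every $n \geq 3$; iterating down to $n=2$ yields the theorem.

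I would first unpack the cell structure. Because $G_{1,X}$ is reflexive and its edge relation is ``comparable by inclusion'' on the simplices of $X$, a cell of $\text{Hom}(K_n, G_{1,X})$ is a tuple $\eta = (\eta_1, \ldots, \eta_n)$ of non-empty collections of simplices of $X$ such that any two simplices drawn from distinct $\eta_i$'s are comparable under inclusion. The key combinatorial observation I aim to establish is a ``pinching lemma'': whenever $\sigma, \sigma' \in \eta_i$ are incomparable, every simplex $\tau$ occurring in some $\eta_j$ with $j \neq i$ satisfies $\tau \supseteq \sigma \cup \sigma'$ or $\tau \subseteq \sigma \cap \sigma'$. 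From this one deduces that every cell of $\text{Hom}(K_{n-1}, G_{1,X})$ admits at least one simplex of $X$ comparable with all of its members.

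Next I would analyse the forgetful poset map
\[
\pi \colon \mathcal F\bigl(\text{Hom}(K_n, G_{1,X})\bigr) \longrightarrow \mathcal F\bigl(\text{Hom}(K_{n-1}, G_{1,X})\bigr), \qquad (\eta_1, \ldots, \eta_n) \longmapsto (\eta_1, \ldots, \eta_{n-1}),
\]
and apply Quillen's Theorem~A. For a cell $\sigma = (\eta_1, \ldots, \eta_{n-1})$ write $C(\sigma)$ for the set of simplices of $X$ comparable with every member of $\eta_1 \cup \cdots \cup \eta_{n-1}$. The strict preimage $\pi^{-1}(\sigma)$ is the poset of non-empty subsets of $C(\sigma)$, which by the previous step has a maximum $C(\sigma)$ and is therefore contractible.

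The main obstacle is to upgrade contractibility of strict preimages to contractibility of the full upper preimage $\pi^{-1}(\mathcal F^{\geq \sigma})$ required by Quillen's theorem, because the natural operator that replaces $\eta_n$ by $C(\eta_1, \ldots, \eta_{n-1})$ is antitone in the first $n-1$ coordinates (enlarging them shrinks the admissible set). I plan to overcome this in one of two ways. The first is to construct a genuine closure operator on the preimage using a fixed witness simplex $\tau_\sigma \in C(\sigma)$ produced from the pinching lemma together with a careful case analysis on which antichains appear within each $\eta_i$. The second is to perform an explicit acyclic matching on $\mathcal F(\text{Hom}(K_n, G_{1,X}))$ that pairs each cell $(\eta_1, \ldots, \eta_n)$ containing such a distinguished simplex in $\eta_n$ with the cell obtained by deleting it, leaving as unmatched critical cells precisely those of $\text{Hom}(K_{n-1}, G_{1,X})$.
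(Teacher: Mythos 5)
Your skeleton coincides with the paper's: restrict along the last coordinate of $K_n$, show the fibers are cones over a set of common neighbors, and invoke a Quillen-type theorem. But two points need attention, one of which is a genuine misstep.

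First, the step you dispose of in one sentence --- deducing from the pinching lemma that every cell $(\eta_1,\dots,\eta_{n-1})$ admits a simplex of $X$ comparable with all members of $\eta_1\cup\dots\cup\eta_{n-1}$, i.e.\ that $C(\sigma)=\bigcap_i N(\eta_i)\neq\emptyset$ --- is where most of the paper's work lies (Proposition~\ref{prop4.1}). The pinching lemma only controls pairs of incomparable simplices within a single coordinate; to manufacture one simplex comparable with \emph{everything} one starts from the union $\tau_0$ of the minimal-dimension simplices of a distinguished coordinate (this union is a face of $X$ only because some \emph{other} coordinate contains all of them, which is where $n\geq 2$ enters) and then iteratively absorbs the remaining incomparable elements, $\tau_0\subseteq\tau_1\subseteq\cdots$. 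This induction has to be written out; it is not an immediate corollary of the pinching lemma.

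Second, the ``main obstacle'' you identify is not actually there, and your first proposed remedy fails as stated. A fixed witness $\tau_\sigma\in C(\sigma)$ is comparable with the members of $\sigma=(\rho(1),\dots,\rho(n-1))$ but need not be comparable with the members of a larger cell $(\eta_1,\dots,\eta_{n-1})\geq \sigma$, so ``adjoin $\tau_\sigma$ to the last coordinate'' is not even well defined on the upper preimage $\pi^{-1}(\mathcal{F}^{\geq\sigma})$ (take $\rho(1)=\{\{1\}\}$, $\tau_\sigma=\{1,2\}$, and $\{1,3\}\in\eta_1$). The resolution is much simpler and is exactly condition (B) of the Babson--Kozlov Quillen-type lemma (Lemma~\ref{fibthm}, \cite[Proposition 3.2]{BK06}) that the paper applies: for $\eta$ with $\varphi(\eta)\geq\rho$, the poset $\varphi^{-1}(\rho)\cap\P_{\leq\eta}$ has the \emph{maximum} element $(\rho(1),\dots,\rho(n-1),\eta(n))$ --- one keeps the last coordinate and shrinks the first $n-1$ down to $\rho$, which is legitimate because $\eta(n)\subseteq\bigcap_i N(\eta(i))\subseteq\bigcap_i N(\rho(i))$. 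Together with contractibility of the fibers (cones with apex $B=C(\rho)$, nonempty by Proposition~\ref{prop4.1} applied to $K_{n-1}$ with $n-1\geq 2$), this is all the lemma requires: no closure operator on the full upper preimage and no discrete Morse matching are needed. Your acyclic-matching alternative could likely be pushed through, but it is solving a problem that has a one-line answer.
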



\section{{\bf Preliminaries}}

A {\it graph} is an ordered pair $G=(V(G),E(G))$, where $V(G)$ is the set of vertices and $E(G)  \subseteq V(G) \times V(G)$ is the set of edges of $G$. All the graphs in this article are assumed to be undirected {\it i.e.}, $(v,w)=(w,v)$. The vertices $v$ and $ w $ are said to be adjacent, if $(v, w)\in E(G)$. This is also denoted by $v \sim w$. If $(v,v) \in E(G)$ for all $v \in V(G)$, then the graph $G$ is called a {\it reflexive} graph. The {\it common neighborhood} of a subset $A$ of $V(G)$ is defined as $N(A)= \{x \in  V(G) \ | \ (x,a) \in E(G), \,\,\forall\,\, a \in A \}$.

A graph $H$ with $V(H) \subseteq V(G)$ and $E(H) \subseteq E(G)$ is called a {\it subgraph} of the graph $G$. For a nonempty subset $U$ of $V(G)$, the subgraph of  $G$ with vertex set $ U$ and edge set being $ \{(a, b) \in E(G) \ | \ a, b \in U\}$ is called the induced subgraph and is denoted by $G[U]$.

A {\it path} in $G$ from $v$ to $w$ is a sequence of distinct vertices  $v_0, \ldots , v_{k}$ such that $v_0=v$, $v_{k}=w$ and $(v_i, v_{i+1}) \in E(G)$, $\forall \ i \in \{0, 1, \ldots , k-1\}$. A graph is said to be {\it connected}, if there exists a path between any two distinct vertices of $G$.  The {\it distance} from $u$ to $v$, denoted by $d(u,v)$, in a connected graph $G$ is the minimum number of vertices required for a path from $u$ to $v$. For a connected graph $G$, the {\it diameter} of $G$, denoted by $\text{diam} (G) $, is $ \text{max} \{d(v, w) \ | \ v, w \in V(G)\}$.

  A {\it graph homomorphism} from  $G$ to $H$ is a function $f : V(G) \to V(H)$ where $(v,w) \in E(G)$ implies that $(f(v),f(w)) \in E(H).$

 An {\it (abstract) simplicial complex} on a vertex set $V$ is a collection $X$ of subsets of $V$ such that if $\si \in X$ and $\tau \subset \si$, then $\tau \in X$. The elements of $X$ of cardinality $k+1$ are referred to as {\it k-simplices} or $k$-dimensional simplices.  A simplex $\si \in X$ such that $\si \not\subset \tilde{\si}$ for any $\tilde{\si} \in X$, is called a {\it facet}.
 
 The {\it $k$-skeleton} of the simplicial complex $X$, denoted by $X^{k}$, is the collection all those simplices $\si$, with $\text{dim}(\si) \leq k$. Here $\text{dim} (\si)$ denotes the dimension of $\si$. 

\begin{Def}\label{def:Cl}
The {\it clique complex}, Cl$(G)$ of a graph $G$ is the simplicial complex with $A \in $ Cl$(G)$ if and only if the induced subgraph $G[A]$ is complete.
\end{Def}

 We now define a simplicial collapse.
 
 \begin{Def}
 If $X$ is an abstract simplicial complex and $\sigma, \tau \in X$ such that
 
 \begin{itemize}
 \item[(i)] $  \tau \subsetneq \sigma$, and
 \item[(ii)] $\sigma$ is the only facet of $X$ containing $\tau$,
 \end{itemize}
 
then $\tau$ is called a {\it free face} of $\si$ and $(\si, \tau)$ is called a {\it collapsible pair}.  Further, a {\it simplicial collapse} of $X$ is the simplicial complex $Y$ obtained by the removal of all simplices $\gamma$ where $\tau\subseteq \gamma \subseteq \si$. This is denoted by $X \searrow Y$. Additionally, if dim$(\tau)= \text{dim}(\si)-1$, then this is referred to as an {\it elementary collapse}.
 \end{Def}

The poset whose elements are the simplices of $X$, ordered by inclusion, is called the {\it face poset} of $X$ and is denoted by $\F(X)$.
For any poset $P$, the simplicial complex whose simplices are the chains of $P$ is called the {\it order complex} of $P$ and is denoted by $\Delta(P)$. 

The  {\it first barycentric subdivision} of a simplicial complex $X$, denoted by $\text{Sd}(X)$, is the order complex of the face poset of $X$, {\it i.e.}, $\text{Sd}(X)=\Delta(\F(X))$. In other words, $\text{Sd}(X)$ is a simplicial complex whose vertices are all the simplices of $X$. Two vertices in $Sd(X)$ are in a face, if and only if one of them is contained in the other. Thus, the facets of $\text{Sd}(X)$ are the maximal chains in $X$, considered as a poset. If $k\geq 2$, then the $k^{th}$  iterative barycentric subdivision of $X$ is the barycentric subdivision of $\text{Sd}^{k-1}(X)$.

  For a simplicial complex $X$, $G_{k,X}$ is a reflexive graph which is obtained by taking the $1$-skeleton of the $k^{th}$ barycentric subdivision of $X$ and adding a loop at each vertex.
  
  {\bf Example:}
  \begin{figure}[H]
	\begin{subfigure}[]{0.45 \textwidth}
		\centering
		\begin{tikzpicture}
		[scale=0.6]
\draw [line width=1.2pt] (2.5,5.)-- (0,0);
\draw [line width=1.2pt] (5,0)-- (0,0);
\draw [line width=1.2pt] (2.5,5.)-- (5,0);
\draw (-0.8,0) node[anchor=north west] {$2$};
\draw (5.1,0) node[anchor=north west] {$3$};
\draw (2.1,5.99) node[anchor=north west] {$1$};
\end{tikzpicture}
		\caption{$X$}\label{fig:X}
	\end{subfigure}
	\begin{subfigure}[]{0.45 \textwidth}
		\centering
		\begin{tikzpicture}
		[scale=0.5, vertices/.style={circle, draw = black!100, fill= gray!0, inner sep=0.5pt}] 
		\tikzset{every loop/.style={looseness=10}}
		\node (a) at (2.5,5) {$\{1\}$};
		\node (b) at (0,0)  {$\{2\}$};
		\node (c) at (5,0)  {$\{3\}$};
		\node (d) at (1.25,2.5)  {$\{1,2\}$};
		\node (e) at (2.5,0)  {$\{2,3\}$};
		\node (f) at (3.75,2.5)  {$\{1,3\}$};
		
		\foreach \from/\to in {a/d, d/b, b/e, e/c, c/f, f/a}
		\draw (\from) -- (\to);
		\path 
		(a) edge[loop above] (a)
		(c) edge[loop right] (c)
		(f) edge[loop right] (f)
		(e) edge[loop below] (e)
		(b) edge[loop left] (b)
		(d) edge[loop left] (d);
		\end{tikzpicture}
		\caption{$G_{1,X}$}\label{fig:G_{1,X}}
	\end{subfigure}
	\caption{}\label{fig:totalX}
\end{figure}
  
 A {\it prodsimplicial complex} is a polyhedral complex, each of whose cells is a direct product of simplices (refer \cite{Koz07} for more details on prodsimplicial complexes).

\begin{Def}\label{prel:nbdcompelex}The {\it neighborhood complex}, $\N(G)$ of a graph $G$ is the abstract simplicial complex with vertices being the non isolated vertices of $G$ and simplices being all those subsets of $V(G)$ with a common neighbor. 
\end{Def}

\begin{Def}\label{prel:homcompelex}
Given two graphs $G$ and $H$, $Hom(G,H)$ is a poset whose elements are all functions $\eta :V(G) \longrightarrow 2^{V(H)}\sm \{\emptyset\}$, such that 
\begin{equation}\label{homcondition}
(u,v) \in E(G) \implies \eta(u)\times \eta(v) \subseteq E(H).
\end{equation} The partial order is given by inclusion, {\it i.e.}, $\eta \leq \eta^\prime$ whenever $\eta(u) \subseteq \eta^\prime(u)$ for all $u \in V(G)$.
\end{Def}

The $\text{Hom}$ complex is often thought of as a topological space. In this context, we refer to the space obtained as the geometric realization of the order complex of the poset $\text{Hom}(G,H)$. We now discuss some results related to Hom complexes which are used in this article.

\begin{lem}[{\cite[Proposition $4.2$]{BK06}}]\label{homtonbd}
Hom$(K_2,G)$ is homotopy equivalent to $\N(G)$.
\end{lem}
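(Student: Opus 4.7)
The plan is to apply the simplicial Nerve Lemma to a cover of $\text{Hom}(K_2, G)$ by subcomplexes indexed by vertices of $G$. Recall that $\text{Hom}(K_2, G)$ is the order complex of the poset whose elements are pairs $(A, B)$ of non-empty subsets of $V(G)$ with $A \times B \subseteq E(G)$, ordered by componentwise inclusion. For each $v \in V(G)$, I would define the subposet $U_v = \{(A, B) \in \text{Hom}(K_2, G) : v \in A\}$ and observe that membership in $U_v$ automatically forces $B \subseteq N(v)$ via the cell condition.

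The subcomplexes $\Delta(U_v)$ cover $\Delta(\text{Hom}(K_2, G))$, since any chain of pairs is contained in $U_v$ for any $v$ lying in the first coordinate of its minimum element (which is non-empty by definition). Given $S = \{v_1, \ldots, v_k\} \subseteq V(G)$, the intersection $\bigcap_{i} U_{v_i}$ consists of those $(A, B)$ with $S \subseteq A$, which already forces $B \subseteq \bigcap_{v \in S} N(v)$; thus the intersection is non-empty precisely when $\bigcap_{v \in S} N(v) \neq \emptyset$, exactly the condition for $S$ to be a simplex of $\N(G)$. This identifies the nerve of the cover with $\N(G)$ on the nose. Whenever the intersection is non-empty, the assignment $r(A, B) = (S, B)$ is a well-defined, order-preserving self-map of $\bigcap_i U_{v_i}$ satisfying $r(A, B) \leq (A, B)$; the straight-line homotopy along these poset comparisons exhibits the intersection as deformation retracting onto the image of $r$, which is the full simplex on vertex set $\bigcap_{v \in S} N(v)$ and hence contractible.

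The simplicial Nerve Lemma then yields $\text{Hom}(K_2, G) \simeq \text{Nerve}(\{U_v\}_{v \in V(G)}) = \N(G)$. The main point requiring attention is the well-definedness of $r$: one must check that $S \times B \subseteq E(G)$ for every $(A, B)$ in the intersection, which follows from the inclusion $B \subseteq \bigcap_{v \in S} N(v)$ argued above. A secondary point is ensuring the version of the nerve lemma used is the one for subcomplexes of a simplicial complex whose pairwise (and higher) intersections are contractible; once this framework is fixed, every remaining step is formal verification.
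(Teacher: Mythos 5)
Your proposal is correct, but note that the paper does not actually prove this lemma: it is imported verbatim from Babson--Kozlov \cite[Proposition~4.2]{BK06}, so there is no internal proof to compare against. What you have written is a legitimate self-contained argument, and it is pleasantly consonant with the technique the paper does use elsewhere (the proof of Proposition~\ref{prop1} covers a complex by the contractible pieces $X_{x_i}$ and applies Theorem~\ref{nerve} in exactly the same way). The classical Babson--Kozlov route is different: they compose closure operators such as $(A,B)\mapsto (N(B),B)$ and $(A,B)\mapsto (A,N(A))$ to deformation retract $\text{Hom}(K_2,G)$ onto a Lov\'asz-type subposet and compare that with $\N(G)$; your nerve argument avoids this two-step retraction at the cost of checking contractibility of all finite intersections. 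Two small points to tighten. First, your map $r(A,B)=(S,B)$ is an order-preserving, idempotent, descending map ($r\leq \mathrm{id}$), so the contraction of each intersection should be justified by the order homotopy lemma (a closure operator induces a deformation retraction of order complexes) rather than by an informal ``straight-line homotopy''; also, the image of $r$ is the poset of pairs $(S,B)$ with $\emptyset\neq B\subseteq N(S)$, whose order complex is the barycentric subdivision of a simplex, hence contractible as you claim. Second, if $v$ is isolated (has no neighbor, not even a loop) then $U_v=\emptyset$, so you should index the cover only by non-isolated vertices; this is exactly what makes the nerve's vertex set agree with the vertex set of $\N(G)$ as defined in Definition~\ref{prel:nbdcompelex}. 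With those two remarks the argument is complete.
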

  
  Let $x$, $y$ be distinct vertices of the graph $G$ such that $N(x) \subseteq N(y).$ The subgraph $G'$, where $V(G')= V(G) \setminus \{x\}$ and 
$E(G') = E(G) \setminus \{(v, w) \in E(G) \ | \ \text{either } v=x \text{ or } w = x\}$, is called a {\it fold} of $G$ and denoted by $G \setminus x$.
  In $2006$, Babson and Kozlov proved that the folds in the first coordinate of the $\text{Hom}$ complexes preserve the homotopy type. 

\begin{thm}[{\cite[Proposition $5.1$]{BK06}}]\label{fold}
If $G$ and $H$ are graphs, and $u$ and $v$ are vertices of $G$ such that
$N(u)\subseteq N(v)$, then $\text{Hom}(G\sm u,H)\simeq Hom(G,H)$.
\end{thm}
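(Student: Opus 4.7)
My plan is to prove Theorem \ref{thm2} by induction on $n$, with the base case $n=2$ being tautological. For the inductive step, I establish that $\text{Hom}(K_n,G_{1,X}) \simeq \text{Hom}(K_{n-1},G_{1,X})$ for every $n \geq 3$; iterating this collapse yields the theorem. Note that the fold lemma (Theorem \ref{fold}) does not apply directly, since in $K_n$ we have $N(u) = V(K_n) \setminus \{u\}$, so no containment $N(u) \subseteq N(v)$ holds between distinct vertices; a different argument is needed.

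To prove the inductive step for a fixed $n \geq 3$, I consider the restriction poset map $p \colon \text{Hom}(K_n, G_{1,X}) \to \text{Hom}(K_{n-1}, G_{1,X})$ induced by the inclusion of $K_{n-1}$ into $K_n$ on the first $n-1$ vertices. I show $p$ is a homotopy equivalence via Quillen's Theorem A: for every $\eta \in \text{Hom}(K_{n-1},G_{1,X})$ I verify that the upper fiber $P_{\geq \eta} := \{\eta' \in \text{Hom}(K_n,G_{1,X}) : p(\eta') \geq \eta\}$ has contractible order complex. To do this, I define a poset retraction $r \colon P_{\geq \eta} \to p^{-1}(\eta)$ by $r(\eta')(i) = \eta(i)$ for $i<n$ and $r(\eta')(n) = \eta'(n)$. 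The cell $r(\eta')$ is valid because shrinking $\eta'(i)$ down to $\eta(i)$ can only weaken the Hom constraint against $\eta'(n)$. Since $r(\eta') \leq \eta'$, the poset maps $r$ and $\mathrm{id}_{P_{\geq \eta}}$ are comparable, so $P_{\geq \eta}$ deformation retracts onto $p^{-1}(\eta)$ via the standard straight-line homotopy.

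The fiber $p^{-1}(\eta)$ consists of cells $\eta'$ with $\eta'(i) = \eta(i)$ for $i<n$ and $\eta'(n)$ a nonempty subset of
\[
S_\eta := \bigl\{\sigma \in \F(X) : \sigma \text{ is comparable with every } \tau \in U_\eta\bigr\}, \quad U_\eta := \bigcup_{i<n}\eta(i),
\]
ordered by inclusion of the $n$-th coordinate. This is exactly the face poset of a simplex on vertex set $S_\eta$, and hence has contractible order complex as soon as $S_\eta$ is nonempty. The entire argument therefore reduces to the following key claim.

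\textbf{Key Claim.} For every $\eta \in \text{Hom}(K_m, G_{1,X})$ with $m \geq 2$, the set $S_\eta$ is nonempty.

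I plan to prove this by a structural analysis of $U_\eta$ based on the Hom condition. The Hom condition forces any two incomparable simplices in $U_\eta$ to lie within a single coordinate $\eta(k)$, and for any incomparable pair $\sigma,\sigma' \in \eta(k)$, every $\tau$ lying in some other $\eta(j)$ with $j \neq k$ must satisfy either $\tau \subseteq \sigma \cap \sigma'$ or $\tau \supseteq \sigma \cup \sigma'$. Using this dichotomy I will produce an explicit hub simplex $\sigma^* \in S_\eta$, built iteratively by taking unions (or intersections) of incomparable pairs; the hypothesis $m \geq 2$ guarantees the existence of a cross-coordinate witness $\tau$ which forces the resulting union or intersection to lie in $\F(X)$. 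The principal obstacle is to carry out this construction so that the single hub $\sigma^*$ is comparable with every element of $U_\eta$ simultaneously, including those elements of $\eta(k)$ not participating in any incomparable pair; I expect this to proceed by induction on the complexity of the incomparability pattern of $U_\eta$, with the base case (when $U_\eta$ is a chain) giving $S_\eta \supseteq U_\eta \neq \emptyset$ immediately.
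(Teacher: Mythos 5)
There is a fundamental mismatch here: the statement you were asked to prove is Theorem \ref{fold}, the Babson--Kozlov fold theorem, which asserts that for \emph{arbitrary} graphs $G$ and $H$ and vertices $u,v$ of $G$ with $N(u)\subseteq N(v)$, one has $\text{Hom}(G\setminus u,H)\simeq \text{Hom}(G,H)$. Your proposal does not address this statement at all; it is an outline of a proof of Theorem \ref{thm2}, and you even say explicitly that the fold lemma ``does not apply directly'' to the situation you are analyzing. Nothing in your argument establishes the homotopy equivalence $\text{Hom}(G\setminus u,H)\simeq \text{Hom}(G,H)$ in the generality of the statement: your fibers, your retraction, and your Key Claim all concern the specific complexes $\text{Hom}(K_n,G_{1,X})$, where no fold exists. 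Note also that the paper itself does not prove Theorem \ref{fold}; it is quoted from Babson and Kozlov (Proposition $5.1$ of \cite{BK06}), where it is proved by exhibiting $\text{Hom}(G\setminus u,H)$ as a strong deformation retract of $\text{Hom}(G,H)$ via a closure operator on the face poset. A correct submission would either reproduce such an argument or explicitly defer to the citation, not substitute a different theorem.

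Even judged as an argument for Theorem \ref{thm2} (which it appears you intended), the proposal is incomplete at its decisive point. Your reduction via Quillen's Theorem A and the retraction $r$ onto the fiber $p^{-1}(\eta)$ parallels the paper's use of Lemma \ref{fibthm}, and the identification of the fiber with the face poset of a simplex on $S_\eta$ matches the paper's cone argument. But everything hinges on the nonemptiness of $S_\eta$, which is exactly Proposition \ref{prop4.1} of the paper, and there you only state a plan (``I plan to prove this\ldots'', ``I expect this to proceed by induction\ldots''). The construction of the hub simplex comparable with every element of $U_\eta$, including the elements of $\eta(k)$ not involved in any incomparable pair, is precisely where the paper expends its effort (the iterative construction of $\tau_0,\tau_1,\ldots,\tau_{p_k-i_1}$ from the sets $\s_\ell$), and that work is absent from your proposal.
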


In fact, Kozlov proved that $\text{Hom}(G\sm u,H)$ is a strong deformation retract of $\text{Hom}(G,H)$. The following lemma, often called a {\it Quillen type Lemma}, proved by  Babson and Kozlov has been used in this article.
\begin{lem}[{\cite[Proposition $3.2$]{BK06}}]\label{fibthm}
Let $\varphi : P \longrightarrow Q$ be a map of finite posets. If $\varphi$ satisfies
\begin{enumerate}
\item[(A)] $\Delta(\varphi^{-1}(q))$ is contractible for each $q \in Q$ and
\item[(B)] for each $p \in P$ and $q \in Q$ with $\varphi(p) \geq q$, the poset $\varphi^{-1}(q)\cap P_{\leq p}$ has a  maximal element,
\end{enumerate}
then $\Delta(\varphi): \Delta(P) \longrightarrow \Delta(Q)$ is a homotopy equivalence.
\end{lem}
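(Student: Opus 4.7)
My plan is to derive the lemma from Quillen's Theorem A for posets, in the dual (coslice) form which states that a poset map $\varphi : P \to Q$ induces a homotopy equivalence on order complexes whenever $\Delta\bigl(\varphi^{-1}(Q_{\geq q})\bigr)$ is contractible for every $q \in Q$, where $Q_{\geq q} = \{q' \in Q : q' \geq q\}$. Conditions (A) and (B) are set up to yield exactly this contractibility, so the core of the argument is to trade the full upper fiber $\varphi^{-1}(Q_{\geq q})$ for the narrow fiber $\varphi^{-1}(q)$ without changing homotopy type.

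Fix $q \in Q$ and write $U_q = \varphi^{-1}(Q_{\geq q})$ and $F_q = \varphi^{-1}(q)$. Using (B), define
\[
r : U_q \to F_q, \qquad r(p) = \max\bigl(F_q \cap P_{\leq p}\bigr).
\]
First I would check that $r$ is a well-defined poset map: monotonicity follows because $p \leq p'$ in $U_q$ implies $F_q \cap P_{\leq p} \subseteq F_q \cap P_{\leq p'}$, so the top element $r(p)$ of the smaller set still lies in the larger set and is dominated by its top $r(p')$. By construction $r(p) \leq p$, and $r$ fixes $F_q$ pointwise since for $p \in F_q$ the element $p$ itself is the top of $F_q \cap P_{\leq p}$. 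Writing $\iota : F_q \hookrightarrow U_q$ for the inclusion, we get $r \circ \iota = \mathrm{id}_{F_q}$ and $\iota \circ r \leq \mathrm{id}_{U_q}$ as comparable poset maps, so the standard fact that comparable poset maps induce homotopic realizations gives $\Delta(U_q) \simeq \Delta(F_q)$. By (A), $\Delta(F_q)$ is contractible, hence so is $\Delta(U_q)$.

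With upper-fiber contractibility in hand, Quillen's Theorem A immediately yields that $\Delta(\varphi) : \Delta(P) \to \Delta(Q)$ is a homotopy equivalence. The most delicate point, and the one I would flag as the main obstacle, is the reading of ``maximal element'' in (B) as ``largest element'' (a unique top of $F_q \cap P_{\leq p}$): this is what makes $r$ a canonical poset map rather than a choice-dependent section, and monotonicity of $r$ can fail if only a maximal (non-top) element were guaranteed. Once this interpretation is fixed, the argument is a formal closure-operator construction layered on top of Quillen's Theorem A, and no further structural input from $P$ or $Q$ is needed.
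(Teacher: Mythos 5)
The paper does not prove this lemma at all: it is imported verbatim as Proposition~3.2 of \cite{BK06}, so there is no internal proof to compare against. Your derivation is correct and is in fact essentially the argument of the cited source: one reads condition (B) as asserting a \emph{greatest} element of $\varphi^{-1}(q)\cap P_{\leq p}$ (as you rightly flag, a merely maximal element would make the retraction $r$ choice-dependent and possibly non-monotone, and for finite posets would reduce (B) to nonemptiness), uses the closure map $p\mapsto\max\bigl(\varphi^{-1}(q)\cap P_{\leq p}\bigr)$ together with the order-homotopy lemma to deformation-retract $\varphi^{-1}(Q_{\geq q})$ onto $\varphi^{-1}(q)$, and then invokes Quillen's Theorem~A for posets. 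Your argument is complete; the only cosmetic addition worth making is the remark that (A) already forces every fiber, hence every $\varphi^{-1}(Q_{\geq q})$, to be nonempty, so the retraction is defined on a nonempty poset.
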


We now state the {\it Nerve Lemma}, a useful tool from Topological Combinatorics, which has been used in this article. We first define the nerve associated to a cover of a topological space.

\begin{Def} Let $X$ be a topological space with a cover $\A=\{X_{i}\}_{i\in I}$, {\it i.e.}, $X_i \subseteq X$ for each $i\in I$ and $\bigcup_{i\in I}X_i=X$. The  nerve of $\A$, denoted by $\text{ Nerve}(\A)$, is  the simplicial complex with vertex set $I$ and where $\si =\{x_{1}, \ldots , x_{k}\} \subseteq I$ is a simplex in $\text{ Nerve}(\A)$ if and only if $X_{x_1}\cap X_{x_2} \cap \ldots \cap X_{x_n}$ is nonempty.
\end{Def}

\begin{thm}[{\cite[Theorem $2.13$]{cso05}}]\label{nerve}
Let $X$ be a simplicial complex and $\{X_i \ | \ i \in I\}$ be a family of subcomplexes such that $X = \bigcup_{i \in I} X_i$. If every nonempty finite intersection $X_{i_1}\cap \ldots \cap X_{i_r}$ is contractible, then $X$ and $Nerve(\{X_i \ | \ i \in I \})$ are homotopy equivalent.
\end{thm}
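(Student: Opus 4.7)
The plan is to deduce the nerve lemma from the Quillen-type Lemma \ref{fibthm} by constructing a single ``mixed'' poset that maps homotopy-equivalently to both of the face posets in sight. Let $P$ be the poset whose elements are pairs $(\sigma, J)$ with $\sigma$ a simplex of $X$ and $J \subseteq I$ a nonempty finite set such that $\sigma \in \bigcap_{j \in J} X_j$, ordered by componentwise containment. There are then two natural order-preserving projections
\[
\varphi_1 \colon P \to \F(X),\ (\sigma, J) \mapsto \sigma, \qquad \varphi_2 \colon P \to \F(\text{Nerve}(\A)),\ (\sigma, J) \mapsto J,
\]
and the goal is to show that each satisfies both hypotheses of Lemma \ref{fibthm}.

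For hypothesis (A) applied to $\varphi_1$, the fiber over $\sigma$ is the poset of nonempty subsets of $I_\sigma := \{i \in I : \sigma \in X_i\}$, which is the face poset of a simplex and hence has contractible order complex. For (A) applied to $\varphi_2$, the fiber over $J$ is the face poset of the subcomplex $\bigcap_{j \in J} X_j$, whose order complex is the barycentric subdivision $\text{Sd}\bigl(\bigcap_{j \in J} X_j\bigr)$, homotopy equivalent to $\bigcap_{j \in J} X_j$ and thus contractible by hypothesis. For (B), given $(\sigma, J) \in P$ and $\sigma' \subseteq \sigma$, the pair $(\sigma', J)$ lies in $P$ (since $\sigma' \subseteq \sigma \in \bigcap_{j \in J} X_j$ forces $\sigma' \in \bigcap_{j \in J} X_j$) and is visibly the maximum of $\varphi_1^{-1}(\sigma') \cap P_{\leq (\sigma, J)}$; symmetrically, given $(\sigma, J) \in P$ and $J' \subseteq J$, the pair $(\sigma, J')$ is the required maximum for $\varphi_2$.

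Combining both applications of Lemma \ref{fibthm} then yields the chain
\[
X \simeq \Delta(\F(X)) \simeq \Delta(P) \simeq \Delta(\F(\text{Nerve}(\A))) \simeq \text{Nerve}(\A),
\]
which is the desired equivalence. The most delicate step I anticipate is not the Quillen check but rather the careful formulation of $P$: one must verify that the containment condition defining membership in $P$ is inherited when shrinking either coordinate, so that $P$ is truly a subposet closed under the operations needed for condition (B). Once $P$ is set up correctly, the rest is essentially a diagram chase; the real content of the theorem is packed into Lemma \ref{fibthm} and the contractibility hypothesis, which together translate the local data (contractibility of each intersection) into the global homotopy equivalence of $X$ with its nerve.
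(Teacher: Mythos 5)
Your argument is correct, but it is worth noting that the paper offers no proof of this statement at all: Theorem \ref{nerve} is imported verbatim from Csorba's thesis and used as a black box. What you have written is a genuine, self-contained derivation of the (finite) Nerve Lemma from the paper's own Lemma \ref{fibthm}, via the standard ``blow-up'' poset $P$ of pairs $(\sigma,J)$ with the two projections onto $\F(X)$ and $\F(\mathrm{Nerve}(\A))$ --- essentially the classical poset-theoretic proof one finds in Bj\"orner's survey. All the verifications check out: the fiber of $\varphi_1$ over $\sigma$ is the face poset of the simplex on $I_\sigma=\{i: \sigma\in X_i\}$, nonempty precisely because the $X_i$ cover $X$; the fiber of $\varphi_2$ over $J$ is $\F\bigl(\bigcap_{j\in J}X_j\bigr)$, whose order complex is the barycentric subdivision of a complex that is contractible by hypothesis; and your candidates $(\sigma',J)$ and $(\sigma,J')$ really are greatest elements of the relevant truncated fibers (note that condition (B) must be read as requiring a greatest element, not merely a maximal one, or it would be vacuous --- your witnesses satisfy the stronger reading). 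Two small caveats: Lemma \ref{fibthm} as stated applies to finite posets, so your argument proves the finite version of Theorem \ref{nerve}, which is all this paper ever uses ($I=X^0$ for a finite complex $X$); and the final identifications $\Delta(\F(Y))=\mathrm{Sd}(Y)\cong Y$ should be recorded as homeomorphisms of geometric realizations. With those remarks, your proof stands, and it has the pleasant feature of deriving the Nerve Lemma from machinery the paper already sets up rather than from an external citation.
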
 
 
\section{Homotopy type of \texorpdfstring{$\N(G_{1,X})$}{NS}} 

Let $X$ be a finite simplicial complex and let $G_{1,X}$ be the graph given by the $1$-skeleton of the first barycentric subdivision of $X$ with a loop at each vertex.  Thus, $V(G_{1,X})$ is the set of simplices of $X$.  Clearly, $(v,w) \in E(G_{1,X})$ implies that either $v \subseteq w$ or $w \subseteq v$.  

For each $x _{i} \in X^{0}$, where $X^{0}$ is the $0$-skeleton of $X$, define
 \begin{equation}
 X_{x_i} = \P(N(x_i)) \setminus \{\emptyset\},
 \end{equation} 
 where $\P(N(x_i))$ is the {\it power set} of $N(x_i)=\{y \in V(G_{1,X}) \ | \ (x_i,y) \in E(G_{1,X})\}$. Thus, $\si \in X_{x_i}$ implies that $\si \cap X^{0} =\emptyset$  or $\si \cap X^{0} =\{x_{i}\}.$
  
\begin{prop}\label{prop1} $\bigcup \{ X_{x_i} \ | \ x_i \in X^{0}\}$ is homotopy equivalent to $X$.

\end{prop}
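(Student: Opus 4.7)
The plan is to apply the Nerve Lemma (Theorem \ref{nerve}) to the cover $\{X_{x_i}\mid x_i \in X^0\}$ of $Y := \bigcup_{x_i \in X^0} X_{x_i}$. To do this, I need to verify two things: that each $X_{x_i}$, viewed as a subcomplex of $Y$, is contractible, and that every nonempty finite intersection $X_{x_{i_1}} \cap \cdots \cap X_{x_{i_k}}$ is contractible. Finally, I will identify the nerve of this cover with $X$ itself.

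For the first point, observe that $X_{x_i} = \mathcal{P}(N(x_i))\setminus\{\emptyset\}$ is, by construction, the full simplex on the vertex set $N(x_i) \subseteq V(G_{1,X})$, hence contractible. For the intersections, I first need to describe $N(x_i)$ explicitly. Since $x_i$ is a $0$-simplex of $X$ and edges in $G_{1,X}$ correspond to containment of simplices, one has
\[
N(x_i) = \{\sigma \in X \mid x_i \in \sigma\}.
\]
A nonempty subset $\tau \subseteq V(G_{1,X})$ lies in the intersection $X_{x_{i_1}}\cap\cdots\cap X_{x_{i_k}}$ precisely when $\tau \subseteq N(x_{i_j})$ for every $j$, i.e.\ when every vertex of $\tau$ (which is a simplex of $X$) contains all of $x_{i_1},\ldots,x_{i_k}$. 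Hence the intersection is the full simplex on $N(x_{i_1})\cap\cdots\cap N(x_{i_k})$, which is either contractible or empty.

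The next step is to characterize when this intersection is nonempty: it is nonempty if and only if there exists some simplex of $X$ containing $\{x_{i_1},\ldots,x_{i_k}\}$, which by downward closure happens if and only if $\{x_{i_1},\ldots,x_{i_k}\}$ itself is a simplex of $X$. Therefore the nerve of the cover $\{X_{x_i}\}$ has vertex set $X^0$, with $\{x_{i_1},\ldots,x_{i_k}\}$ a simplex of the nerve exactly when it is a simplex of $X$; that is, $\mathrm{Nerve}(\{X_{x_i}\mid x_i \in X^0\}) = X$. Applying Theorem \ref{nerve} then yields $Y \simeq X$.

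There is no serious obstacle here; the only mildly delicate point is being careful about the identification $N(x_i) = \{\sigma \in X \mid x_i \in \sigma\}$ (using that $x_i$ is a $0$-simplex, and that the loop at $x_i$ contributes $\{x_i\}$ itself to $N(x_i)$), and verifying that the intersection of the $N(x_{i_j})$'s indeed coincides with the set of simplices of $X$ containing $\{x_{i_1},\ldots,x_{i_k}\}$, which is the content that makes the nerve match $X$ on the nose rather than only up to homotopy.
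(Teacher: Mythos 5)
Your proposal is correct and follows essentially the same route as the paper: identify $\mathrm{Nerve}(\{X_{x_i}\})$ with $X$ (using that a simplex of $X$ containing all the $x_{i_j}$ exists iff $\{x_{i_1},\ldots,x_{i_k}\}\in X$ by downward closure), note that each $X_{x_i}$ and each nonempty intersection is a full simplex and hence contractible, and apply the Nerve Lemma. Your explicit description $N(x_i)=\{\sigma\in X \mid x_i\in\sigma\}$ is just a slightly more detailed rendering of the same argument.
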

 \begin{proof} Let $\A=\{X_{x_i} \ | \ x_i \in X^{0}\}.$ We first show that $\text{Nerve} (\A)$ is  $X$.
 
 Let $\si=\{y_1, \ldots , y_{k}\} \in X$. Here, $\si$ is a vertex in $\text{Sd}(X)$ and thus $\si \in V(G_{1,X})$. Since $\{y_i\} \subseteq \si$, $\forall \ i \in \{1, \ldots , k\}$, we see that in $\text{Sd}(X)$, $\{y_{i}, \si\}$ is a $1$-simplex. Thus, $\si \in N(y_i)$, $\forall \ i \in \{1, \ldots , k\}$. This implies that $\si \in X_{y_1} \cap \ldots \cap X_{y_k}$, {\it i.e.}, $\si \in \text{Nerve} (\A)$.
 
Conversely, consider a simplex $\tau \in \text{Nerve} (\A)$. Here, there exist $y_1, \ldots , y_\ell \in X^0$, such that $ X_{y_1} \cap X_{y_2} \ldots \cap X_{y_\ell}\neq \emptyset$ and $\tau =\{y_{1}, \ldots , y_{\ell}\}$. So, there exists $y \in V(G_{1,X})$ such that $y \in X_{y_1}  \cap X_{y_2} \cap \ldots \cap X_{y_\ell}$, {\it i.e.,} $y \in N(y_i)$ $\forall~ i  \in \{1,\ldots,\ell \}$. In other words, $(y,y_i) \in E(G_{1,X})$ $\forall~ i  \in \{1,\ldots,\ell \}$. Now, $y \in V(G_{1,X})$ implies that $y \in X$. As $X$ is a simplicial complex, $y \in X$ and $y_i \in y$, $\forall~ i  \in \{1,\ldots,\ell \}$, we get $\{y_1,\ldots,y_\ell\}\subseteq y$. This implies that $\{y_1,\ldots,y_\ell\} \in X$. Thus, $\tau \in X$. Therefore, Nerve${(\A)}=X$.
 
Each of the $X_{x_i}$'s is contractible. Further every nonempty intersection of the $X_{x_i}$'s, $x_i \in X^{0}$ is a simplex and therefore contractible. Using the Nerve Lemma (Theorem \ref{nerve}), $\text{Nerve}(\A)$ is homotopy equivalent to $\bigcup\{ X_{x_i} \ | \ x_i \in X^{0}\}$.  As $\text{Nerve}(\A) =X$, Proposition \ref{prop1} follows.
\end{proof}

We now label the elements of $V(G_{1,X})$ as $V(G_{1,X})=\{\si_1,\si_2,\ldots, \si_p\}$, where $\text{dim}(\si_i)\geq \text{dim}(\si_{i+1})$ for $i \in \{1, \ldots ,p-1\}$. If the cardinality of $X^{0}$ is $q$, then the last $q$ elements of $V(G_{1,X})$ are from $X^{0}$.  Assume that $p-q \geq 1$.
For each $\ell \in \{1, \ldots , p-q+1\}$, define
\begin{equation}\label{defofkl}
K_\ell = \{\si  \ | \ \si \subseteq N(\si_i) \ \text{and} \ \ell \leq i \leq p \}. 
\end{equation}
{\bf Example:} Let $X$ be a simplicial complex as depicted in Figure \ref{fig:totalX}(a) ({\it i.e.}, boundary of a $2$ simplex) and $V(G_{1,X})= \{\{1,2\},\{1,3\},\{2,3\},\{1\},\{2\},\{3\}\}$ be a labeling on vertices of $G_{1,X}$. Then $K_1$ and $K_2$ are given in Figure \ref{fig:Kl}(a) and \ref{fig:Kl}(b) respectively.
\begin{figure}[H]
\begin{subfigure}[]{0.45 \textwidth}
\centering
\begin{tikzpicture}
\filldraw[fill=gray!60, draw=black] (6.,5.)--(4,4)--(4,2)--cycle;
\filldraw[fill=gray!60, draw=black] (6.,1.)--(8,2)--(4,2)--cycle;
\filldraw[fill=gray!60, draw=black] (6.,5.)--(8,2)--(8,4)--cycle;
\draw [line width=1.2pt] (6.,1.)-- (8.,2.);
\draw [line width=1.2pt] (8.,2.)-- (8.,4.);
\draw [line width=1.2pt] (8.,4.)-- (6.,5.);
\draw [line width=1.2pt] (6.,5.)-- (4.,4.);
\draw [line width=1.2pt] (4.,4.)-- (4.,2.);
\draw [line width=1.2pt] (4.,2.)-- (6.,1.);
\draw [line width=1.2pt] (6.,5.)-- (8.,2.);
\draw [line width=1.2pt] (8.,2.)-- (4.,2.);
\draw [line width=1.2pt] (4.,2.)-- (6.,5.);
\draw [line width=1.2pt] (4.,4.)-- (6.,1.);
\draw [line width=1.2pt] (6.,1.)-- (8.,4.);
\draw [line width=1.2pt] (8.,4.)-- (4.,4.);
\filldraw[fill=gray!60, draw=black] (6.,5.)--(4,4)--(8,4)--cycle;
\filldraw[fill=gray!60, draw=black] (6.,1.)--(4,4)--(4,2)--cycle;
\filldraw[fill=gray!60, draw=black] (6.,1.)--(8,2)--(8,4)--cycle;
\draw [dotted, line width=1.2pt] (5.35574,4.)-- (6.,5.);
\draw [dotted, line width=1.2pt] (4.,2.)-- (4.634565,2.98);
\draw [dotted, line width=1.2pt] (5.3004,2.0)-- (4.,2.);
\draw [dotted, line width=1.2pt] (6.7,2.0)-- (8.,2.);
\draw [dotted, line width=1.2pt] (6.6537,4.)-- (6.,5.);
\draw [dotted, line width=1.2pt] (7.345,2.98)-- (8.,2.);
\draw (5.5308698365027745,0.843431837623781) node[anchor=north west] {$\{2,3\}$};
\draw (3.066306132666899,2.252933592781262) node[anchor=north west] {$\{2\}$};
\draw (8.115433540338649,2.252933592781262) node[anchor=north west] {$\{3\}$};
\draw (8.14896034050903,4.403057490138038) node[anchor=north west] {$\{1,3\}$};
\draw (5.5106207140815165,5.608783322703882) node[anchor=north west] {$\{1\}$};
\draw (2.866306132666899,4.403057490138038) node[anchor=north west] {$\{1,2\}$};
\end{tikzpicture}
\caption{$K_1$}
\label{fig:K1}
\end{subfigure}
\begin{subfigure}[]{0.45 \textwidth}
\centering
\begin{tikzpicture}
\filldraw[fill=gray!60, draw=black] (6.,1.)--(8,2)--(4,2)--cycle;
\filldraw[fill=gray!60, draw=black] (6.,5.)--(8,2)--(8,4)--cycle;
\draw [line width=1.2pt] (6.,1.)-- (8.,2.);
\draw [line width=1.2pt] (8.,2.)-- (8.,4.);
\draw [line width=1.2pt] (8.,4.)-- (6.,5.);
\draw [line width=1.2pt] (6.,5.)-- (4.,4.);
\draw [line width=1.2pt] (4.,4.)-- (4.,2.);
\draw [line width=1.2pt] (4.,2.)-- (6.,1.);
\draw [line width=1.2pt] (6.,5.)-- (8.,2.);
\draw [line width=1.2pt] (8.,2.)-- (4.,2.);
\draw [line width=1.2pt] (4.,4.)-- (6.,1.);
\draw [line width=1.2pt] (6.,1.)-- (8.,4.);
\draw [line width=1.2pt] (8.,4.)-- (4.,4.);
\filldraw[fill=gray!60, draw=black] (6.,5.)--(4,4)--(8,4)--cycle;
\filldraw[fill=gray!60, draw=black] (6.,1.)--(4,4)--(4,2)--cycle;
\filldraw[fill=gray!60, draw=black] (6.,1.)--(8,2)--(8,4)--cycle;
\draw [dotted, line width=1.2pt] (5.3004,2.0)-- (4.,2.);
\draw [dotted, line width=1.2pt] (6.7,2.0)-- (8.,2.);
\draw [dotted, line width=1.2pt] (6.6537,4.)-- (6.,5.);
\draw [dotted, line width=1.2pt] (7.345,2.98)-- (8.,2.);
\draw (5.5308698365027745,0.843431837623781) node[anchor=north west] {$\{2,3\}$};
\draw (3.066306132666899,2.252933592781262) node[anchor=north west] {$\{2\}$};
\draw (8.115433540338649,2.252933592781262) node[anchor=north west] {$\{3\}$};
\draw (8.14896034050903,4.403057490138038) node[anchor=north west] {$\{1,3\}$};
\draw (5.5106207140815165,5.608783322703882) node[anchor=north west] {$\{1\}$};
\draw (2.866306132666899,4.403057490138038) node[anchor=north west] {$\{1,2\}$};
\end{tikzpicture}
\caption{$K_2$}\label{fig:K2}
\end{subfigure}
\caption{}\label{fig:Kl}
\end{figure}

The following are easy observations.
\begin{enumerate}
\item $K_{\ell+1}$ is a subcomplex of $K_\ell$, 
\item $K_1=\N(G_{1,X})$ and 
\item $K_{p-q+1}=K_{X^0}$, where $K_{X^{0}} =\{\si \ | \ \si \subseteq N(x),~ \text{where} ~ x \in X^{0}\}$. 
\end{enumerate}
Recall that for any $\si \in V(G_{1,X})$, $N(\si)=\{\tau \in V(G_{1,X}) \ | \ \text{either}~ \tau \subseteq \si ~  \text{or} ~ \si \subseteq \tau\}$. In $V(G_{1,X})$, if $\si \neq \tau$ and $\text{dim} (\si) =\text{ dim} (\tau)$, then both $\si \nin N(\tau)$ and $\tau\nin N(\si)$ hold. 

\begin{prop}\label{prop6.2.2}
$K_{1} \searrow K_{2}$.
\end{prop}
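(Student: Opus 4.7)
The plan is to produce the collapse $K_1\searrow K_2$ by exhibiting a discrete Morse matching on the face poset of $K_1$ that pairs each simplex of $K_1\sm K_2$ with another simplex of $K_1\sm K_2$, leaving the simplices of $K_2$ as the only critical (unmatched) cells. The matching I would use is $\si\leftrightarrow\si\triangle\{\si_1\}$, i.e., toggling the vertex $\si_1$. A preliminary observation that makes this meaningful is that $K_1\sm K_2$ lies inside the full simplex on the vertex set $N(\si_1)$: because $\si_1$ has maximum dimension in $V(G_{1,X})$, $N(\si_1)$ is precisely the set of faces of $\si_1$ in $X$ (including $\si_1$ itself via the loop), and any simplex of $K_1$ containing a vertex $v\nin N(\si_1)$ must satisfy $v\in N(\si_j)$ for some $j\geq 2$, forcing the whole simplex into $K_2$.

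The main technical step is to verify that the pairing stays inside $K_1\sm K_2$, i.e., $\si\in K_1\sm K_2$ implies $\si\triangle\{\si_1\}\in K_1\sm K_2$. If $\si_1\nin\si$, any $j\geq 2$ witnessing $\si\cup\{\si_1\}\in K_2$ automatically witnesses $\si\in K_2$, a contradiction. The nontrivial case is $\si_1\in\si$, where one must rule out $\si\sm\{\si_1\}\subseteq N(\si_j)$ for every $j\geq 2$. If $\si_j$ is a face of $\si_1$, then $\si_1\in N(\si_j)$ and the containment upgrades to $\si\subseteq N(\si_j)$, contradicting $\si\nin K_2$. The subtle subcase is when $\si_j$ is incomparable with $\si_1$ in $X$: here the key geometric observation is that for each $\tau\in\si\sm\{\si_1\}$, the alternative $\si_j\subseteq\tau\subsetneq\si_1$ would make $\si_j$ a face of $\si_1$, so necessarily $\tau\subseteq\si_j$; combined with $\tau\subsetneq\si_1$ this forces $\tau\subseteq\si_j\cap\si_1$. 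The intersection $\si_j\cap\si_1$ is then a nonempty proper face of $\si_1$, hence a vertex $\si_k$ of $G_{1,X}$ with $k\geq 2$, and $\si\subseteq N(\si_k)$ yields $\si\in K_2$, the desired contradiction.

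Acyclicity of the matching follows from the fact that only the single vertex $\si_1$ is being toggled: the integer $|\si\sm\{\si_1\}|$ is non-increasing along every arrow in the modified Hasse diagram, and it strictly decreases along any unmatched arrow immediately following a matched arrow, because the would-be $f$-preserving reversal is blocked by the matching itself. The standard consequence of discrete Morse theory that an acyclic matching whose critical cells form a subcomplex $Y$ yields a simplicial collapse to $Y$ (see, e.g., \cite[Chapter~11]{Koz07}) then produces $K_1\searrow K_2$. The main obstacle in the whole argument is the incomparable subcase above; the key combinatorial insight is passing to the intersection $\si_j\cap\si_1$, which must itself be an indexed vertex of $G_{1,X}$ and which supplies the contradiction.
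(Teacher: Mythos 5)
Your proof is correct and is essentially the paper's own argument in discrete-Morse packaging: the matching $\si\leftrightarrow\si\,\triangle\,\{\si_1\}$ encodes exactly the paper's sequence of elementary collapses removing the pairs $(\si,\si\sm\{\si_1\})$, and your handling of the incomparable subcase by passing to $\si_1\cap\si_j$ (a nonempty proper face of $\si_1$, hence some $\si_k$ with $k\geq 2$) is precisely the content of Claim~\ref{c1}. The only point worth adding is a one-line check that $\{\si_1\}$ itself already lies in $K_2$ --- which holds because the standing assumption $p-q\geq 1$ forces $\dim(\si_1)\geq 1$, so $\si_1$ has a proper nonempty face $\si_j$ with $j\geq 2$ and $\{\si_1\}\subseteq N(\si_j)$ --- so that your matching never attempts to pair $\{\si_1\}$ with the empty set.
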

\begin{proof} 
As $(\si_{i}, \si_{i}) \in E(G_{1,X})$ for all $\si_{i} \in V(G_{1,X})$, we get $ \si_{1} \in N(\si_{1})$. Clearly, $(N(\si_1), N(\si_{1}) \sm \{\si_1\})$ is a collapsible pair. Let $K_{1}^{1}$ be the subcomplex of $K_{1}$ obtained from this collapse. If possible, consider a simplex $\sigma \in K_{1}^{1}$ such that $\si_{1} \in \si$. If $\si \sm \{\si_{1}\}$ is a free face of $\si$, then let $K_{1}^{2}$ be the subcomplex of $K_{1}^{1}$ obtained by removing the collapsible pair $(\si, \si \sm \{\si_{1}\})$. Repeat this process to obtain a subcomplex  $\widetilde{K_{1}}$ of $K_{1}$, where $\si' \in \widetilde{K_{1}}$ and $\si_{1} \in \si'$ implies that $\si' \sm \{\si_{1}\}$ is not a free face of $\si'$. Clearly, $K_2$ (as defined in Equation \eqref{defofkl}) is a subcomplex of $\widetilde{K_1}$. 

\begin{claim}\label{c1}
$\widetilde{K_1}=K_2.$
\end{claim}
Suppose that $\widetilde{K_1} \sm K_{2} \neq \emptyset .$ There exists $\tau \in \widetilde{K_1} \sm K_{2}$  such that $\si_{1} \in \tau$. Further, by definition of $\widetilde{K_1}$, $\tau \sm \{\si_1\}$ is not a free face of $\tau$, {\it i.e.},  there exists $i_0 \in \{2, \ldots , p\}$ such that $\tau \sm \{\si_{1}\} \subseteq N(\si_{i_0})$ and $\text{dim}(\si_{i_0}) \leq  \text{dim} (\si_{1})$. 
Observe that $\tau \sm \{\si_{1} \} \subseteq N(\si_{i_0})$ implies that $\forall  \  \widetilde{\tau} \in \tau \sm \{ \si_{1}\}$, either  $\widetilde{\tau} \subseteq \si_{i_0}$ or $ \si_{i_0} \subseteq \widetilde{\tau}.$  Further, as $\widetilde{\tau} \subsetneq \si_{1}$, we see that
\begin{enumerate}  
\item $\si_{1} \cap \si_{i_0} \neq \emptyset$ and
\item $\widetilde{\tau} \subseteq \si_{i_0} \Rightarrow \widetilde{\tau} \subseteq \si_{i_0} \cap \si_{1}$ and $ \si_{i_0}  \subseteq \widetilde{\tau} \Rightarrow \si_{i_0} \cap \si_{1}  \subseteq \widetilde{\tau}.$
\end{enumerate}
 
Thus, $\tau \sm \{\si_{1}\} \subseteq N(\si_{1} \cap \si_{i_0}).$ Also, $\si_{1} \cap \si_{i_0} \subsetneq \si_{1}$ implies that  $\text{dim}(\si_{1} \cap \si_{i_0} ) < \text{dim}(\si_{1})$. As both $ \si_{1} \in N(\si_1 \cap \si_{i_0})$ and $\tau \sm \{\si_{1}\} \subseteq N(\si_{1} \cap \si_{i_0})$ hold, we get $\tau \subseteq N(\si_{1} \cap \si_{i_0}) \in K_{2}$, a contradiction to the assumption that $\tau \in \widetilde{K_1} \sm K_{2}$.
Therefore, $\widetilde{K_1} =K_2$, thereby proving Claim \ref{c1}. This completes the proof of Proposition \ref{prop6.2.2}.
\end{proof}

By an argument similar to the one above, $K_\ell$ collapses onto $K_{\ell+1}$, for $ \ell\in \{1, \ldots , p-(q+1)\}.$ We now prove:

\begin{prop}\label{prop3}
 $K_{p-q}$ collapses onto $K_{p-q+1}$.
\end{prop}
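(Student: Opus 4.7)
The plan is to mirror the proof of Proposition \ref{prop6.2.2} with $\sigma_{p-q}$ playing the role of $\sigma_1$. Since the loop at $\sigma_{p-q}$ ensures $\sigma_{p-q} \in N(\sigma_{p-q})$, the pair $(N(\sigma_{p-q}), N(\sigma_{p-q}) \setminus \{\sigma_{p-q}\})$ is collapsible in $K_{p-q}$; I would perform this elementary collapse and then iteratively collapse any further pair $(\sigma, \sigma \setminus \{\sigma_{p-q}\})$ whose lower face is free in the current complex, calling the terminal complex $\widetilde{K_{p-q}}$. The inclusion $K_{p-q+1} \subseteq \widetilde{K_{p-q}}$ should follow from the observation that every simplex of $K_{p-q+1}$ lies in $N(v)$ for some vertex $v \in X^0$, which supplies an obstructing alternate facet that prevents the relevant free-face condition.

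The substantive step is the reverse inclusion. Arguing by contradiction, pick $\tau \in \widetilde{K_{p-q}} \setminus K_{p-q+1}$; then $\sigma_{p-q} \in \tau$ and, by the termination of the collapse, there exists $i_0 \neq p-q$ with $\tau \setminus \{\sigma_{p-q}\} \subseteq N(\sigma_{i_0})$. The decisive new observation, not present in Proposition \ref{prop6.2.2}, is that $\sigma_{p-q}$ is a $1$-simplex. Indeed, every non-vertex simplex of a simplicial complex has an edge as a face, so the minimum dimension among non-vertex simplices of $X$ is $1$, and the decreasing-dimension ordering places such an edge at position $p-q$. Consequently every proper nonempty subset of $\sigma_{p-q}$ is a vertex of $X$, and so equals some $\sigma_j$ with $j \geq p-q+1$.

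The case analysis of Proposition \ref{prop6.2.2} now goes through with this in hand: in the case $\sigma_{p-q} \cap \sigma_{i_0} \subsetneq \sigma_{p-q}$, the intersection is a vertex $\sigma_j$ with $j \geq p-q+1$, and the chain of containments from the earlier proof yields $\tau \subseteq N(\sigma_j) \in K_{p-q+1}$, a contradiction. The remaining possibility $\sigma_{p-q} \subseteq \sigma_{i_0}$ was excluded in Proposition \ref{prop6.2.2} by the maximality of $\dim(\sigma_1)$, but here $\sigma_{i_0}$ may be a higher-dimensional simplex of $X$ containing the edge $\sigma_{p-q}$ (forcing $i_0 < p-q$). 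I would treat this by fixing either vertex $v$ of $\sigma_{p-q}$ and showing $\tau \subseteq N(v)$: each $\widetilde{\tau} \in \tau$ lies in $N(\sigma_{p-q}) \cap N(\sigma_{i_0})$ and is therefore a vertex of $\sigma_{p-q}$, the edge $\sigma_{p-q}$ itself, or a simplex of $X$ containing $\sigma_{p-q}$, each of which is comparable to $v$ in $V(G_{1,X})$.

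The principal obstacle I anticipate is this residual case $\sigma_{p-q} \subseteq \sigma_{i_0}$, which requires an argument with no analogue in Proposition \ref{prop6.2.2}. The delicate subcase is when $\tau$ contains both vertex-elements $\{a\}$ and $\{b\}$ of $\sigma_{p-q} = \{a,b\}$, so no single vertex substitution succeeds; there I expect to fall back on a direct analysis of the collapse order (e.g., performing the collapses in decreasing order of $|\tau \cap \{s \in V(G_{1,X}) : \sigma_{p-q} \subsetneq s\}|$) to argue that such $\tau$ must already have been collapsed, again contradicting $\tau \in \widetilde{K_{p-q}}$.
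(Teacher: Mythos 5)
Your strategy is the paper's: perform the collapses $(\sigma,\sigma\setminus\{\sigma_{p-q}\})$ starting from $N(\sigma_{p-q})$, and use the key fact that $\sigma_{p-q}$ is a $1$-simplex to show the terminal complex equals $K_{p-q+1}$. But your case analysis has two genuine gaps. The first is the one you flag yourself: the subcase of $\sigma_{p-q}\subseteq\sigma_{i_0}$ in which $\tau$ contains both vertex-elements of the edge is not proved, only deferred to ``a direct analysis of the collapse order.'' The missing idea is that this entire case is vacuous. By Equation \eqref{defofkl}, every facet of $K_{p-q}$ is of the form $N(\sigma_i)$ with $i\geq p-q$, so the alternate facet witnessing that $\tau\setminus\{\sigma_{p-q}\}$ is not free must be $N(\sigma_{j_0})$ with $j_0\in\{p-q+1,\dots,p\}$; thus $\sigma_{j_0}$ is a vertex of $X$, and $\sigma_{p-q}\subseteq\sigma_{j_0}$ is impossible since $\dim(\sigma_{p-q})=1>0=\dim(\sigma_{j_0})$. (The same restriction is what makes the initial pair $(N(\sigma_{p-q}),N(\sigma_{p-q})\setminus\{\sigma_{p-q}\})$ collapsible; the loop at $\sigma_{p-q}$ alone does not give this, and the paper proves it as a separate first step.)

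The second gap is the case that actually requires new work and that you pass over: $\sigma_{p-q}\cap\sigma_{j_0}=\emptyset$, i.e.\ the obstructing vertex does not lie on the edge $\sigma_{p-q}$. Your assertion that a proper intersection ``is a vertex $\sigma_j$ with $j\geq p-q+1$'' fails when the intersection is empty, and nothing excludes this: the elements of $\tau\setminus\{\sigma_{p-q}\}$ may all be simplices of $X$ properly containing the edge $\sigma_{p-q}$ as well as the vertex $\sigma_{j_0}$. The paper's proof spends its second half precisely here: disjointness forces every $\widetilde{\tau}\in\tau\setminus\{\sigma_{p-q}\}$ to contain $\sigma_{p-q}$ (a proper nonempty subset of the edge would be a vertex incomparable with the disjoint vertex $\sigma_{j_0}$, contradicting $\widetilde{\tau}\in N(\sigma_{j_0})$), so either endpoint $x_0$ of the edge satisfies $\tau\subseteq N(\{x_0\})$, giving $\tau\in K_{p-q+1}$, a contradiction. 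With these two repairs your outline becomes the paper's argument.
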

\begin{proof} By definition of $G_{1,X}$, $\si_{p-q} \in N(\si_{p-q})$. Initially we prove by contradiction that   $(N(\si_{p-q}), N(\si_{p-q}) \sm \{\si_{p-q}\})$ is a collapsible pair.

 If $(N(\si_{p-q}), N(\si_{p-q}) \sm \{\si_{p-q}\})$ is not a collapsible pair., then there must exist $r \in \{p-q+1, \ldots , p\}$, such that $N(\si_{p-q}) \sm \{\si_{p-q}\} \subseteq N(\si_r)$. 
 Using Equation \eqref{defofkl} and observation $3$, we note that $\text{dim}(\si_{p-q}) =1 > \text{dim}(\si_{r})=0$.
 
Since $\si_{p-q} \in V(G_{1,X})$, $\si_{p-q} \subseteq X^0$. Thus, as $\text{dim}(\si_{p-q}) >0$, each $x \in \si_{p-q}$ also belongs to $N(\si_{p-q}) \sm \{\si_{p-q}\}$. In particular $x \in N(\si_{r})$, {\it i.e.}, $x \in \si_{r}$ for all $x \in \si_{p-q}$. In other words, $\si_{p-q}  \subseteq \si_r$. This implies that $\text{dim}(\si_{p-q}) \leq  \text{dim}(\si_{r})=0$,  a contradiction. Therefore,  $(N(\si_{p-q}), N(\si_{p-q}) \sm \{\si_{p-q}\})$ is a collapsible pair. As in the case of the proof of Proposition \ref{prop6.2.2}, let $\widetilde{K_{p-q}}$ be the subcomplex of $K_{p-q}$, such that whenever $\si \in \widetilde{ K_{p-q}}$ and $\si _{p-q} \in \si$, the simplex $\si \sm \{\si_{p-q}\}$ is not a free face of $\si$.  
Clearly, $\widetilde{ K_{p-q}}$ is a subcomplex of $K_{p-q+1}.$ 

 Assume that there exists $\tau \in \widetilde{ K_{p-q}} \sm K_{p-q+1}$.
Here, $\tau \subseteq N(\si_{p-q})$ and $\tau \not\subset N(\si_{i})$, $\forall \ i \in \{p-q+1, \ldots , p\}$. Therefore,
there exists $j_0 \in \{p-q+1, \ldots, p\}$ such that $\tau \sm \{\si_{p-q}\} \subseteq N(\si_{j_0})$ and $\text{dim}(\si_{p-q}) > \text{dim}(\si_{j_0})$.

 If $\si_{p-q} \cap \si_{j_0} \neq \emptyset$, then by an argument similar to that in Claim \ref{c1}, we see that $\tau \in K_{p-q+1}$, a contradiction to the assumption that $\tau \in \widetilde{ K_{p-q}} \sm K_{p-q+1}$..

Now consider $\si_{p-q}\cap \si_{j_0}= \emptyset$. If there exists $x_0\in \si_{p-q}$ such that $x _0 \in \widetilde{\tau_i}$ for each $\widetilde{\tau}
 \in \tau \sm \{\si_{p-q}\}$, then $\tau \sm \{\si_{p-q}\} \subseteq N(x_0)$. Further $\si_{p-1} \subseteq N(x_0)$ implies that $\tau \subseteq N(x_0)$. This implies that $\tau \in K_{p-q+1}$, again a contradiction. Thus, there exists at least one element $\widetilde{\tau_{1}}$ in $\tau \sm \{\si_{p-q}\}$, where $x_0 \nin \widetilde{\tau_{1}}.$  Hence, $\si_{p-q} \not\subseteq \widetilde{\tau_{1}}.$ 
 
Since $\widetilde{\tau_{1}} \in N(\si_{p-q})$, we have $\widetilde{\tau_{1}} \subsetneq \si_{p-q}$. Further, 
 $\si_{p-q}\cap \si_{j_0}= \emptyset$ implies that $\si_{j_0} \not\subseteq \si_{p-q}$. We thus conclude that $\widetilde{\tau_{1}} \in N(\si_{j_0})$. Therefore $\widetilde{\tau_{1}} \subsetneq \si_{j_0}$. This implies that $\widetilde{\tau_{1}} \in \si_{p-q} \cap \si_{j_0}$, a contradiction to the assumption that $\si_{p-q}\cap \si_{j_0}= \emptyset$. Therefore,  $\widetilde{ K_{p-q}} = K_{p-q+1}$. This completes the proof of Proposition \ref{prop3}.
\end{proof}

We can now determine the homotopy type of $\mathcal{N}(G_{1,X})$.

\begin{proof}[Proof of Theorem \ref{ng1x}:] ~~~~~~~

\medskip

The facets of $\mathcal{N}(G_{1,X})$ are of the form $N(\si_i)\subseteq V(G_{1,X})$. By Propositions \ref{prop6.2.2} and  \ref{prop3}, $K_1$ collapses onto $K_{p-q+1}$.  As  $K_{p-q+1}= K_{X^{0}}$, using Proposition \ref{prop1}, we see that $K_{p-q+1} \simeq X$. Since $K_1=\mathcal{N}(G_{1,X})$ and $K_1 \simeq K_{p-q+1}$, we see that $\N(G_{1,X}) \simeq X$, thereby completing the proof of Theorem \ref{ng1x}.
\end{proof}


\section{Homotopy type of \texorpdfstring{Hom$(T,G_{1,X})$}{NS}}

Let $T$ be a finite connected graph with $|V(T)| \geq 2$ and $\text{diam}(T)=1$. As the diameter of $T$ is $1$, we conclude that any two distinct vertices of $T$ are connected by an edge, {\it i.e.},
\begin{equation}\label{diamcond}
\forall ~x, y \in V(T),~ x\neq y \Longrightarrow (x,y)\in E(T).
\end{equation}
We consider the following two cases: in the first we assume that there is at least one vertex in $T$ with a loop and in the second that there are no loops at any vertex in $T$. 

\begin{enumerate}
\item There exists at least one vertex in $T$, say $ v$, such that $(v,v) \in E(T)$. 

Let $V(T) =\{u_0,u_1,\ldots, u_s,v\}$. By Equation \eqref{diamcond}, $N(u_0)\subseteq N(v)$. Therefore, $T$ folds onto a subgraph $T\setminus u_0$. Using Proposition  \ref{fold}, we get $\text{Hom}(T,G_{1,X})\simeq \text{Hom}(T\setminus u_0,G_{1,X})$. Again in $T\setminus u_0$, $N(u_1)\subseteq N(v)$ and therefore $T\setminus u_0$ folds onto a subgraph $T\setminus \{u_0,u_1\}$. Hence, by a sequence of folds, we get that $T$ folds onto the subgraph $T'$, where $V(T')=\{v\}$ and $E(T') =\{(v,v)\}.$ By repeated use of Proposition  \ref{fold}, we get that $\text{Hom}(T,G_{1,X})\simeq \text{Hom}(T',G_{1,X})$. 

As $T'$ is a single looped vertex and the graph $G_{1,X}$ has a loop at every vertex, we get Hom$(T',G_{1,X})$ $\simeq \text{Cl}(G_{1,X})$ (see Definition \ref{def:Cl}). Further, $\text{Cl}(G_{1,X}) = \text{Sd}(X)$, the first barycentric subdivision of $X$. Therefore, Hom$(T',G_{1,X})\simeq \text{Sd}(X) \simeq X$.

\item $T$ has no loop at any vertex. 

In this case, Equation \eqref{diamcond} implies that $T= K_{n}$, the complete graph on $n \geq 2$ vertices $V(T)= \{1, \ldots , n\}$. Hence, $E(T) =\{(i,j) \ | \ i \neq j \ \text{and} \ i,j  \in \{1, 2, \ldots , n\}\}$. 
Consider $\eta \in \text{Hom}(T,G_{1,X})$.  For each $i \in \{1, \ldots , n\}$, $\eta(i) \subseteq V(G_{1,X})$, {\it i.e.}, any element in $\eta(i)$ is a simplex in $X$.  As $(i,j) \in E(T)$ for any two distinct elements $i$ and $j$ in $V(T)$, from Equation \eqref{homcondition}, we have \begin{equation}\label{eq4.1}
\si \in \eta(i) ~ \text{and}~ \tau \in \eta(j) \Rightarrow \ \text{either}\ \si \subseteq \tau \ \text{or} \ \tau \subseteq \si.
\end{equation}
In other words, $\eta(i) \subseteq N(\eta(j))$, $\forall \ i \neq j$.
For each $i \in \{1, \ldots , n\}$, the elements in $\eta(i)$ are arranged as follows: 
\begin{equation}\label{eta}
 \eta (i)=\{\si^i_1, \ldots, \si^i_{p_i}\} \ \text{where}\ \text{dim}(\si_{j}^{i}) \leq \text{dim}(\si_{j+1}^{i}),\ \forall \ j \in \{1, \ldots , p_{i}-1\}. 
\end{equation} 

So, for each $i \in \{1, \ldots , n\}$, one of the following statements holds. There exists

\begin{itemize} 
\item[(i)] $ \alpha \in \mathbb{N} \cup \{0\}$ such that dim$(\si)=\alpha~ \forall ~\si \in \eta(i)$ or
\item[(ii)] $ i' \in \{1, \ldots , p_i-1\}$, with $\text{dim}(\si_{i'}^{i}) < \text{dim}(\si_{i'+1}^{i})$ and $\text{dim}(\si_{j}^{i}) = \text{dim}(\si_{i'}^{i})$, $\forall \ j \in \{1, \ldots , i'\}$.
\end{itemize}
Define 
\begin{eqnarray}
O(\eta(i)) & =\text{min}\ \{\text {dim}(\si^i_j) \ | \ j \in \{1, \ldots ,p_i\}\} \label{a12} \\ 
O(\eta) & = \text{min} \ \{O(\eta(i)) \ | \ i \in \{1, \ldots ,n\}\}.\label{b12}
\end{eqnarray}
Considering the neighbors of $\eta(i)$, for each $i \in \{1, \ldots , n\}$, we prove the following.
\begin{prop}\label{prop4.1}
If $\eta \in \text{Hom}(K_n,G_{1,X})$, then $\bigcap\limits_{i=1}^{n} N(\eta(i)) \neq \emptyset$.
\end{prop}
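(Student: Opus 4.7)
The claim is that some simplex $\rho$ of $X$ is comparable under inclusion to every element of $\bigcup_{i=1}^n \eta(i)$; any such $\rho$ will lie in $\bigcap_{i=1}^n N(\eta(i))$. My plan is to construct such a $\rho$ explicitly, working at an index $i_0$ that attains $O(\eta)$. The naive choice of $\rho = \sigma_0$ for a single minimum-dimension $\sigma_0 \in \eta(i_0)$ does not always succeed: the Hom condition imposes no constraint between pairs in the same $\eta(i_0)$, and indeed with $X$ the boundary of a triangle, $\eta(1) = \{\{1\},\{2\}\}$, $\eta(2) = \{\{1,2\}\}$, neither $\{1\}$ nor $\{2\}$ is comparable to the other.

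Fix $i_0$ with $O(\eta(i_0)) = O(\eta)$ and set
\[
A = \Bigl\{\sigma \in \eta(i_0) : \sigma \subseteq \tau \text{ for every } \tau \in \textstyle\bigcup_{j \neq i_0} \eta(j)\Bigr\}, \qquad \rho := \bigcup_{\sigma \in A} \sigma.
\]
First I would show $A \neq \emptyset$: for any $\sigma \in \eta(i_0)$ with $\dim \sigma = O(\eta)$ and any $\tau \in \eta(j)$ with $j \neq i_0$, the edge $(i_0,j) \in E(K_n)$ and the Hom condition force $(\sigma,\tau) \in E(G_{1,X})$, i.e.\ $\sigma$ and $\tau$ are comparable as simplices of $X$; combined with $\dim \sigma = O(\eta) \leq \dim \tau$ this gives $\sigma \subseteq \tau$, so $\sigma \in A$. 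Since $n \geq 2$, some $\eta(j)$ with $j \neq i_0$ is nonempty, and $\rho$ is a nonempty subset of any $\tau$ in it, hence itself a simplex of $X$ and a vertex of $G_{1,X}$.

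Finally I would verify $\rho$ is comparable to every $\sigma'' \in \bigcup_i \eta(i)$. For $\sigma'' \in \eta(j)$ with $j \neq i_0$, every $\sigma \in A$ satisfies $\sigma \subseteq \sigma''$, so $\rho \subseteq \sigma''$. For $\sigma'' \in \eta(i_0)$ the argument splits into two clean cases: if $\sigma'' \in A$ then $\sigma'' \subseteq \rho$; if $\sigma'' \notin A$ then there exists $\tau^* \in \bigcup_{j \neq i_0} \eta(j)$ with $\sigma'' \not\subseteq \tau^*$, and since the Hom condition still forces comparability of $\sigma''$ and $\tau^*$ we must have $\tau^* \subsetneq \sigma''$; combined with $\rho \subseteq \tau^*$ this yields $\rho \subseteq \sigma''$. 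The real obstacle is precisely this last dichotomy: the Hom condition is silent on pairs inside a single $\eta(i_0)$, so the required comparability between $\rho$ and $\sigma'' \in \eta(i_0)$ must be routed through a witness $\tau^*$ in a different coordinate, and it is the ``small versus not small'' split encoded by $A$ that guarantees such a witness exists exactly when it is needed.
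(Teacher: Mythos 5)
Your proof is correct, and it reaches the paper's conclusion by a noticeably cleaner route. Both arguments start the same way: fix a coordinate $i_0$ attaining $O(\eta)$ and observe that any $\sigma\in\eta(i_0)$ of dimension $O(\eta)$ is forced, by comparability plus minimality of dimension, to be contained in every element of every other coordinate. From there the paper proceeds iteratively: it sets $\tau_0$ equal to the union of the minimum-dimension elements of $\eta(k)$ and then runs an induction over sets $\s_1,\s_2,\ldots$ of elements of $\eta(k)$ still incomparable to the running union, enlarging $\tau_{\ell-1}$ to $\tau_\ell$ at each stage and re-verifying each time that the result remains a common neighbor of the other coordinates. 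You instead define in one shot the set $A$ of elements of $\eta(i_0)$ contained in \emph{every} element of $\bigcup_{j\neq i_0}\eta(j)$ and take $\rho=\bigcup_{\sigma\in A}\sigma$; nonemptiness of $A$ is exactly the shared opening observation, and your two-case check ($\sigma''\in A$ gives $\sigma''\subseteq\rho$, while $\sigma''\notin A$ produces a witness $\tau^*$ with $\rho\subseteq\tau^*\subsetneq\sigma''$) replaces the paper's entire induction. What your version buys is the elimination of the dimension-ordering bookkeeping (the arrangement in \eqref{eta}, the index $i_1$, the chain of sets $\s_\ell$) and of the terminal assertions the paper leaves essentially unargued (that $\s_{p_k-i_1}$ is a singleton exhausting $\s_0$, and the appeal to \eqref{eq4.1} to get containment rather than mere comparability in the last step); the only thing the iterative formulation offers in exchange is an explicit filtration of $\eta(k)$, which is not used elsewhere in the paper.
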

\begin{proof}
 From Equation \eqref{b12}, there exists $ k \in \{1, \ldots , n\}$ such that $O(\eta)=O(\eta(k))$. As in Equation \eqref{eta}, let $\eta(k)=\{\si^k_1, \ldots, \si^k_{p_k}\}$,  with $\text{dim}(\si_{j}^{k} ) \leq \text{dim}(\si_{j+1}^{k} )$, $\forall \  j \in \{1, \ldots , p_{k} -1\}$. Further, from Equation \eqref{a12}, there exists $\ell \in \{1, \ldots , p_k\},$ such that $\text{dim}(\si^k_{\ell})= O(\eta(k))$. 
 
Define, 
\begin{equation*}i_1= \text{max}\{t \ | \ t \in \{1, \ldots , p_k\} \text{ and } \text{dim}(\si^k_{t})= O(\eta)=O(\eta(k)\}.
\end{equation*}
Thus, $\forall \ j \in \{1, \ldots , i_1\}$, $\text{dim}(\si^k_{j})= O(\eta).$  Further, from Equation \eqref{eq4.1}, we see that
 \begin{equation}\label{eq4.5}
 \text{for each} \ \si \in \eta(i), \ i \neq k, \ \si_{j}^{k} \subseteq \si, \ \forall \ j \in \{1, \ldots , i_1\}.
 \end{equation}
 Let $\tau_{0} =  \cup \{\si_{j}^{k} \ | \ j \in \{1, \ldots , i_1\}\}.$  As $n \geq 2$, we can choose $ t \in V(T) \sm \{ k\}.$  From Equation \eqref{eq4.5}, $\tau_{0} \subseteq \si $, $\forall \ \si \in \eta(t)$. Thus, $\tau_{0}$ is a simplex in $X$ and hence $\tau_{0} \in V(G_{1,X})$. Further, $\si_{j}^{k}\subseteq \tau_{0}$, $\forall ~j \leq i_1$ implies that $\tau_{0} \in N(\si_{j}^{k})$, for each such $j$.  

Now, consider $i \in \{1,2,\ldots,i_1\}$ and $\tau \in \eta(j)$ for $j \neq k$. Since dim$(\si_i^k)\leq \text{dim}(\tau)$, using Equation \eqref{eq4.1}, $\si_i^k \subseteq \tau$. This implies that $\cup \{\si_{j}^{k} \ | \ j \in \{1, \ldots , i_1\}\} \subseteq \tau$, {\it i.e.}, $\tau_0 \subseteq \tau $. Therefore, $\tau_{0} \in N(\eta(i))$, $\forall \ i \neq k$. Define
 \begin{eqnarray}\label{eq4.6}
\s_{0} = \{\si_{i_1+1}^k, \ldots , \si_{p_{k}}^{k}\} \ \text{and} \  & \s_{1} = \{ \si \in \s_{0} \ | \ \tau_{0} \not\subset \si \ \text{and } \ \si \not\subset \tau_{0}\}.
 \end{eqnarray}
 
 We consider the following possibilities for $\s_1$.
 \begin{enumerate}
 \item $\s_{1} =\emptyset$.  
 
 Here $\tau_{0} \in N(\si)$, $\forall \ \si \in \s_{0}$. Thus, $\tau_{0} \in N(\eta(k))$, thereby proving Proposition \ref{prop4.1}. 
 
 \item $\s_{1} \neq \emptyset.$ 
 
 Define $\tau_{1} =\tau_{0} \cup \{\si \ | \ \si \in  \s_{1}\}$. 
 
 If $\s_{1}=\s_{0}$, then $\si \subseteq \tau_{1}$, $\forall \ \si \in \eta(k)$. Thus $\tau_{1} \in N(\eta(k))$. Moreover, using the same argument as in the case of $\tau_0$, we see that $\tau_{1} \in N(\eta(i))$, $\forall \ i \neq k$. Therefore, $\tau_{1} \in N(\eta(i))$, $\forall \ i \in \{1, \ldots , n\}$.
 
 Now, consider the case where $\s_{0} \sm \s_{1} \neq \emptyset$ and $\s_{0} \sm \s_{1} \subsetneq \s_{0}$. Inductively, for $\ell \in \{2, \ldots , p_{k}-i_{1}-1\}$, define 
 \begin{eqnarray}
\s_{\ell} & = & \{ \si \in \s_{0} \sm \{\s_{1} \cup \ldots \cup \s_{\ell-1}\} \ | \ \tau_{\ell-1} \not\subset \si \ \text{and } \ \si \not\subset \tau_{\ell -1}\},
\end{eqnarray}
where $\tau_{\ell-1}  =  \tau_{\ell-2} \ \cup \ \{\si \ | \ \si \in \s_{\ell-1}\}.$

  If $\s_{\ell} = \emptyset$  or $\s_{0} = \s_{1} \cup \ldots  \ \cup \  \s_{\ell}$ for some $\ell \in \{1, \ldots , p_{k}-i_1-1\}$, then by the same argument as that in the case $\ell=1$, we get a proof of Proposition \ref{prop4.1}.

So, consider the case when  $\s_{l} \neq \emptyset$ and $\s_{0} \neq \s_{1}\cup \ldots \cup \s_{\ell}$ for all $\ell \in \{1, \ldots , p_{k}-i_1 -1\}$. 

Let  $\s_{p_k - i_1} =  \{ \si \in \s_{0} \sm \{\s_{1} \cup \ldots \cup \s_{p_k-i_1-1}\} \ | \ \tau_{p_{k}-i_1 -1} \not\subset \si \ \text{and } \ \si \not\subset \tau_{p_{k} -i_1 -1}\}.$ 

 Here, $\s_{p_{k}-i_1}$ has exactly one element and $\s_{0} =\s_{1}\cup \ldots \cup \s_{p_{k}-i_1}.$ So, $\tau_{p_k-i_1}$, which is $\tau_{p_k-i_1-1}\cup \{\si \ | \ \si \in \s_{p_k -i_1}\}$, is a neighbor of each element of $\eta(k)$. 

Let $i \in \{1,2,\ldots, n\}$ and $i \neq k$. Using induction, we have $\tau_{p_{k}-i_1-1} \in N(\eta(i))$. In particular, $\tau_{p_{k}-i_1-1} \subseteq \tau$ for each $\tau \in \eta(i)$. Let $\si \subseteq \tau_{p_{k}-i_1} $ and $\si \not\subset \tau_{p_{k}-i_1-1}$. Since $\si \in \s_{p_k-i_1} \subset \eta(k)$, from Equation \eqref{eq4.1}, $\si \subseteq \tau$ for each $\tau \in \eta(i)$. This implies that $\tau_{p_{k}-i_1} \subseteq \tau$ for each $\tau \in \eta(i)$. Thus, $\tau_{p_{k}-i_1} \in N(\eta(i))$.

As $\tau_{p_{k}-i_1} \in N(\eta(i))$, $\forall \ i \in \{1, \ldots , n\}$, we get the proof of Proposition \ref{prop4.1}.
\end{enumerate}
\end{proof}
\end{enumerate}

Using Proposition \ref{prop4.1}, we prove the main result of this article.

\begin{proof}[Proof of Theorem \ref{thm2}:]~~~~

\medskip

Let  $\P =\F(\text{Hom}(K_n,G_{1,X}))$ and $\Q =\F(\text{Hom}(K_{n-1},G_{1,X}))$. Define $\varphi :\P \longrightarrow \Q$ by $\varphi(\eta)=\eta |_{\{1, \ldots , n-1\}}$, {\it i.e.}, $\varphi (\eta) :\{1, 2, \ldots , n-1\} \rightarrow \Q$  is defined as $\varphi (\eta) (i) =\eta(i)$, $\forall \ i \in \{1, 2, \ldots , n-1\}$.

First, let $\rho \in \Q$, {\it i.e.}, $\rho : \{1, \ldots ,n-1\} \rightarrow 2^{V(G_{1,X})}\sm \{\emptyset\}$ such that $\rho(i)\times \rho(j) \subseteq E(G_{1,X})$, for $i\neq j$. We see that $\varphi^{-1}(\rho)$ is the set of all $n$ tuples $(\rho(1),\ldots, \rho(n-1),B)$, where $B \subseteq V(G_{1,X})$ such that for all $i \in [n-1]$, $\rho(i) \times B \subseteq E(G_{1,X})$. 

Define, $\eta_1: [n] \longrightarrow 2^{V(G_{1,X})}\sm \{\emptyset\}$ by 
\begin{equation}\label{definitioneta1}
 \eta_1(i)= \left\{\def\arraystretch{1.2}%
  \begin{array}{@{}c@{\quad}l@{}}
    \rho(i) & \text{if $i \in \{1,\ldots,n-1\}$, }\\
    \bigcap\limits_{i=1}^{n-1} N(\rho(i)) & \text{if $i=n$.}\\
  \end{array}\right.
\end{equation}

Since $n \geq 3$ and $\rho \in \text{Hom}(K_{n-1},G_{1,X})$, using Proposition \ref{prop4.1}, we have $\bigcap\limits_{i=1}^{n-1} N(\rho(i)) \neq \emptyset$. Thus, $\eta_1(n) \neq \emptyset$. Further, for each $i \in \{1,\ldots,n-1\}$, $\bigcap\limits_{j=1}^{n-1} N(\rho(j)) \subseteq N(\rho(i)) = N(\eta_1(i))$. This implies that $\eta_1(n) \subseteq N(\eta_1(i))$ for all $i \in \{1,\ldots,n-1\}$. Therefore, $\eta_1 \in \text{Hom}(K_n,G_{1,X})$ and $\varphi(\eta_1)=\rho$.

Moreover, if $\gamma \in \varphi^{-1}(\rho)$, then for each $i \in \{1,\ldots,n-1\}$, $\gamma(n)\subseteq N(\gamma(i))$ and $\gamma(i)=\rho(i)=\eta_1(i)$. This implies that $\gamma(n)\subseteq N(\rho(i))$ for all $i \in \{1,\ldots,n-1\}$. Thus, $\gamma(n)\subseteq \bigcap\limits_{i=1}^{n-1} N(\rho(i))=\eta_1(n)$ implying that $\gamma \leq \eta_1$. Therefore, $\eta_1$ is a maximal element of $\varphi^{-1}(\rho)$, implying that $\Delta(\varphi^{-1}(\rho))$ is a cone and therefore contractible.

Now consider, $\rho \in \Q$ and $\eta \in \P$, such that $\rho \leq \varphi(\eta)$, {\it i.e.}, $\rho(i)\subseteq \eta(i)$ for all $i \in [n-1]$. Clearly, $\bigcap\limits_{i=1}^{n-1} N(\rho(i)) \supseteq \bigcap\limits_{i=1}^{n-1} N(\eta(i)) \supseteq \eta(n)$. Then, $$\varphi^{-1}(\rho)\cap \P_{\leq \eta}= \{(\rho(1),\ldots,\rho(n-1),B) \ | \ B \subseteq \eta(n), B\neq \emptyset\}.$$ Since $\eta(n) \subseteq \bigcap\limits_{i=1}^{n-1} N(\rho(i))$, $(\rho(1),\ldots,\rho(n-1),\eta(n)) \in \text{Hom}(K_n,G_{1,X})$ and $(\rho(1),\ldots,\rho(n-1),\eta(n))$ is a maximal element of the poset $\varphi^{-1}(\rho)\cap \P_{\leq \eta}$. 

Thus, the map $\varphi$ satisfies both the conditions of Lemma \ref{fibthm}. Therefore, the induced map $\Delta(\varphi)$ is a homotopy equivalence. 
Hence, the barycentric subdivision of Hom$(K_n,G_{1,X})$ is homotopy equivalent to the barycentric subdivision of $\text{Hom}(K_{n-1},G_{1,X})$, {\it i.e.}, $\text{Sd}(\text{Hom}(K_n,G_{1,X})) \simeq \text{Sd}(\text{Hom}(K_{n-1},G_{1,X}))$. This proves Theorem \ref{thm2}. 
\end{proof}

We are now in a position to give an alternate proof of Theorem \ref{thm3}.

\begin{proof}[Proof of Theorem \ref{thm3}:]~~~~

\medskip

If $T$ has a looped vertex, say $v$, then $T$ folds onto a subgraph $T'$, where $V(T')=\{v\}$ and $E(T')=\{(v,v)\}$. Using Proposition  \ref{fold}, we get $\text{Hom}(T,G_{1,X})\simeq \text{Hom}(T',G_{1,X})$. Observe that,  Hom$(T',G_{1,X})\simeq \text{Cl}(G_{1,X})$.  As the clique complex of $G_{1,X}$, $\text{Cl}(G_{1,X})$  and $\text{Sd}(X)$ are identical and $ \text{Sd}(X)$ is homeomorphic to $X$, we get $\text{Hom}(T,G_{1,X})\simeq X$.  

Now, consider the case where $T$ does not have a loop at any vertex.
Here, $T=K_n$, the complete graph on $n$ vertices for some $n\geq 2.$  From Theorem \ref{thm2}, $\text{Hom}(K_n,G_{1,X})\simeq \text{Hom}(K_{n-1}, G_{1,X})$ for $n\geq 3$. 

Recursively, for any $n >2$,  $\text{Hom}(K_n,G_{1,X})\simeq \text{Hom}(K_2, G_{1,X})$. 

From Theorem \ref{ng1x}, we have $\N(G_{1,X}) \simeq X$. Now, using Lemma \ref{homtonbd}, we get $\text{Hom}(K_2,G_{1,X})\simeq X$. Therefore, $\text{Hom}(K_n,G_{1,X})\simeq X,$ for all $n \geq 2$. 

This completes the proof of Theorem \ref{thm3}.
\end{proof}

 \subsection*{Acknowledgements}
  The author would like to thank Nandini Nilakantan for several helpful discussions and kind advices.



\bibliographystyle{alpha}

\begin{thebibliography}{11}

\bibitem{BK06}
  Eric Babson and Dimitry Kozlov,
  \emph{Complexes of graph homomorphisms},
  Israel Journal of Mathematics
  $152$ $(2006)$, pp $285-312$.
  
\bibitem{BK07}
  Eric Babson and Dimitry Kozlov,
  \emph{Proof of Lov\'asz Conjecture},
  Annals of Mathematics
  $165(3)$ $(2007),$ $965-1007$.
  
\bibitem{cso05}
  P\'eter Csorba,
  \emph{Non-tidy spaces and graph colorings},
  PhD thesis,
  $2005$.

 \bibitem{cso07}
  P\'eter Csorba,
  \emph{Homotopy type of box complexes},
  Combinatorica
  $27(6)$ $(2007),$ $669-682$.
    
\bibitem{anton}
  Anton Dochtermann,
  \emph{The universality of Hom complexes of graphs},
  Combinatorica
  $29(4)$ $(2009),$ $433-448$.
  
  \bibitem{ds12}
  Anton Dochtermann and Carsten Schultz.
  \emph{Topology of Hom complexes and test graph for bounding chromatic number},
  Israel Journal of Mathematics
  $187$ $(2012)$, $371-417$.

\bibitem{SENS}
Samuel Eilenberg and Norman Steenrod,
\newblock {\em Foundations of algebraic topology},
\newblock Princeton University Press, $2015$.

\bibitem{For98}
  Robin Forman,
 \emph{Morse theory for cell complexes },
  Advances in Mathematics
  $134(1)$ $(1998),$ $90-145$.
  
 \bibitem{Koz07}
  Dimitry Kozlov,
  \emph{Combinatorial Algebraic Topology},
  Vol. 21. Algorithhms and Computation in Mathematics,
  Springer Science $\&$ Business Media,
  $2007$.

\bibitem{Lov78}
  L\'aszl\'o Lov\'asz,
  \emph{Kneser's conjecture, chromatic number, and homotopy},
  Journal of Combinatorial Theory Series A
  $25(3)$ $(1978),$ $319-324$.
\end{thebibliography}

\end{document}